\newlength{\tabwidth}
\newlength{\tabheight}
\newlength{\tabrule}
\newlength{\tabwidthx}
\newlength{\tabheightx}
\def\gentabbox#1#2#3#4{\vbox to \tabheight{\setlength{\tabrule}{#3}%
  \setlength{\tabwidthx}{#1\tabwidth}\addtolength{\tabwidthx}{\tabrule}%

\setlength{\tabheightx}{#2\tabheight}\addtolength{\tabheightx}{-\tabheight}%
  \hbox to #1\tabwidth{%
    \hspace{-0.5\tabrule}\rule{\tabrule}{#2\tabheight}\hspace{-\tabrule}%
    \vbox to #2\tabheight{\hsize=\tabwidthx%
      \vspace{-0.5\tabrule}\hrule width\tabwidthx height\tabrule%
      \vspace{-0.5\tabrule}\vfil%
      \hbox to \tabwidthx{\hss#4\hss}%
        \vfil\vspace{-0.5\tabrule}%
      \hrule width\tabwidthx height\tabrule\vspace{-0.5\tabrule}}%
    \hspace{-\tabrule}\rule{\tabrule}{#2\tabheight}\hspace{-0.5\tabrule}}%
  \vspace{-\tabheightx}}}
\def\genblankbox#1#2{\vbox to \tabheight{\vfil\hbox to
#1\tabwidth{\hfil}}}
\def\tabbox#1#2#3{\gentabbox{#1}{#2}{0.4pt}{\strut #3}}
\newcommand{\field}{\mathbb}
\newcommand{\liealgebra}{\mathfrak}
\newcommand{\la}{\liealgebra}
\newcommand{\C}{{\field C}}
\newcommand{\Z}{{\field Z}}
\renewcommand{\b}{\liealgebra b}
\newcommand{\n}{{\la n}}
\newcommand{\ga}{\alpha}
\newtheorem{prop}{Proposition}[section]
\newtheorem{theorem}[prop]{Theorem}
\newtheorem{corollary}[prop]{Corollary}
\newtheorem{proposition}[prop]{Proposition}
\theoremstyle{definition}
\newtheorem{remark}[prop]{Remark}
\newtheorem{example}[prop]{Example}
\newtheorem*{question}{Question}
\newtheorem{definition}[prop]{Definition}
\newcommand{\frg}{\mathfrak{g}}
\newcommand{\frt}{\mathfrak{t}}
\newcommand{\caX}{\mathcal{X}}
\begin{document}
\title[$K$-orbits on $G/B$ and Schubert structure constants]
{$K$-orbits on $G/B$ and Schubert constants for pairs of signed shuffles in types $C$ and $D$}

\author{Benjamin J. Wyser}
\date{\today}


\begin{abstract}
We give positive descriptions for certain Schubert structure constants $c_{u,v}^w$ for the full flag variety in Lie types $C$ and $D$.  This is accomplished by first observing that a number of the $K=GL(n,\C)$-orbit closures on these flag varieties coincide with Richardson varieties, and then applying a theorem of M. Brion on the decomposition of such an orbit closure in the Schubert basis in terms of paths in the weak order graph.
\end{abstract}

\maketitle

\section{Introduction}
Let $G$ be a simple algebraic group over $\C$, of classical type, with $B,B^- \subseteq G$ opposed Borel subgroups.  Let $W$ be the Weyl group for $G$.  For each $w \in W$, there exists a Schubert class $S_w = [\overline{B^-wB/B}] \in H^*(G/B)$.  It is well-known that the classes $\{S_w\}_{w \in W}$ form a $\Z$-basis for $H^*(G/B)$.  As such, for any $u,v \in W$, we have
\[ S_u \cdot S_v = \displaystyle\sum_{w \in W} c_{u,v}^w S_w \]
in $H^*(G/B)$, for uniquely determined non-negative integers $c_{u,v}^w$.  These integers are the \textit{Schubert structure constants}.

Although the Schubert constants are computable, it has been a long-standing open problem, even in type $A$, to give a positive (i.e. subtraction-free) formula for an arbitrary constant $c_{u,v}^w$ in terms of $u,v,w$.  Such positive formulas are known in type $A$ in various special cases, but fewer results of this sort are known in the other classical types.

In \cite{Wyser-11a}, a special case rule for structure constants $c_{u,v}^w$ in type $A_{n-1}$ is described in the event that $(u,v)$ form what is referred to there as a ``$(p,q)$-pair" ($p+q=n$).  The key observation of that paper is that a number of the $K=GL(p,\C) \times GL(q,\C)$-orbit closures on the flag variety coincide with \textit{Richardson varieties} --- intersections of Schubert varieties with opposite Schubert varieties.  The rule follows when this observation is combined with a theorem of M. Brion (Theorem \ref{thm:brion}), which describes intersection numbers of spherical subgroup orbit closures with Schubert varieties in terms of paths in the weak order graph.  Theorem \ref{thm:brion} applies to $K$-orbit closures since $K$ is a \textit{symmetric} subgroup of $GL(n,\C)$ (i.e. the fixed-point subgroup of an involution), and because symmetric subgroups form a special class of spherical subgroups.

In this note, we extend the results of \cite{Wyser-11a} to obtain analogous special case rules for some $c_{u,v}^w$ in Lie types $C$ and $D$.   If elements of $W$ are viewed as signed permutations, then the types of structure constants described by these rules correspond to pairs $(u,v)$ of ``signed shuffles".  The key observation once again is that a number of symmetric subgroup orbit closures on these flag varieties coincide with Richardson varieties.  More specifically, let $G = Sp(2n,\C)$ or $SO(2n,\C)$, and let $X=G/B$ be the flag variety for $G$.  Let $G' = GL(2n,\C)$, and let $X'=G'/B'$ be the flag variety for $G'$.  When $K' = GL(n,\C) \times GL(n,\C) \subseteq G'$ is intersected with $G$, the result is a symmetric subgroup of $G$, isomorphic to $K=GL(n,\C)$ in each case.  Moreover, the intersection of any $K'$-orbit on $X'$ with $X$, if non-empty, is a single $K$-orbit on $X$.  Using this, along with the fact that a number of the $K'$-orbit closures on $X'$ coincide with Richardson varieties in $X'$, we see also that a number of the $K$-orbit closures on $X$ coincide with Richardson varieties in $X$.  Specifically, if $Y'=\overline{Q'}$ is a $K'$-orbit closure on $X'$ coinciding with a Richardson variety in $X'$, and if $Q' \cap X \neq \emptyset$, then $Y:=Y' \cap X$ is a $K$-orbit closure on $X$ which coincides with a Richardson variety in $X$.  Because Theorem \ref{thm:brion} applies to the class of \textit{any} spherical subgroup orbit closure in \textit{any} flag variety, we apply it once again in these settings to obtain the additional rules.

The paper is organized as follows.  Section 1 is the introduction.  In Section 2, we cover some preliminaries, recalling Theorem \ref{thm:brion} and the results of \cite{Wyser-11a} which will be relevant to us here.  With these facts recalled, we describe the results in types $C$ and $D$ in sections 3 and 4, respectively.  We conclude with a natural question in Section 5:  Are there other spherical subgroups of the classical groups some of whose orbit closures happen to coincide with Richardson varieties?

The results presented here and in \cite{Wyser-11a} grew out of the author's doctoral thesis work on some aspects of the equivariant geometry of symmetric subgroup orbit closures on flag varieties.  The author thanks William A. Graham, his research advisor at the University of Georgia, for his invaluable assistance in that project, as well as for his help in editing and revising this manuscript.  The author also thanks an anonymous referee for his/her careful reading and helpful suggestions.

\section{Preliminaries}\label{sect:prelims}
\subsection{Schubert varieties, opposite Schubert varieties, and Richardson varieties}
Let $G,B,B^-,$ and $W$ be as in the introduction.  We quickly define our conventions regarding Schubert varieties, opposite Schubert varieties, and Schubert classes.
\begin{definition}
For $w \in W$, the \textbf{Schubert variety} $X_w$ is defined to be $\overline{BwB/B}$.  This is an irreducible subvariety of $G/B$ of complex dimension $l(w)$.  The \textbf{opposite Schubert variety} $X^w$ is defined to be $\overline{B^-wB/B}$.  This is an irreducible subvariety of $G/B$ of complex \textit{codimension} $l(w)$.  The \textbf{Schubert class} $S_w$ is defined to be the (Poincar\'e dual to the) fundamental class of $X^w$, i.e. $S_w = [X^w]$.  Note that $S_w \in H^{2l(w)}(G/B)$.
\end{definition}

\begin{definition}
For $u,v \in W$, the \textbf{Richardson variety} $X_u^v$ is defined to be $X_u \cap X^v$.  This intersection is non-empty if and only if $u \geq v$ in the Bruhat order on $W$.  In that event, the intersection is proper and reduced, and has dimension $l(u) - l(v)$.
\end{definition}

Due to the fact that $[X_w] = [X^{w_0w}]$ for any $w \in W$ ($w_0$ denoting the longest element of $W$), along with the fact that $X_u^v$ is a proper, reduced intersection, we have the following in $H^*(G/B)$:

\begin{equation}
	[X_u^v] = [X^{w_0u}] \cdot [X^v] = S_{w_0u} \cdot S_v.
\end{equation}

\subsection{$K$-orbits on $G/B$, the weak order, and a theorem of Brion}\label{sect:weak_order}
Let $G$ be any complex, reductive algebraic group, with $\theta: G \rightarrow G$ an involution, and $K=G^{\theta}$ the corresponding symmetric subgroup.  Let $T \subseteq B$ be a $\theta$-stable maximal torus and Borel subgroup, respectively.  $K$ acts on $G/B$ with finitely many orbits (\cite{Matsuki-79}).  Let $Q$ be one of these orbits.  Let $\ga \in \Delta(G,T)$ be a simple root, with $P_{\ga}$ the standard minimal parabolic subgroup of type $\ga$, and
\[ \pi_{\ga}: G/B \rightarrow G/P_{\ga} \]
the natural projection.  The set $\pi_{\ga}^{-1}(\pi_{\ga}(Q))$ is $K$-stable, and contains a dense $K$-orbit.  We denote the dense orbit as follows:
\[ s_{\ga} \cdot Q = \text{ the (unique) dense $K$-orbit on $\pi_{\ga}^{-1}(\pi_{\ga}(Q))$}. \]

If $\dim(\pi_{\ga}(Q)) < \dim(Q)$, then $s_{\ga} \cdot Q = Q$, but if $\dim(\pi_{\ga}(Q)) = \dim(Q)$, $s_{\ga} \cdot Q$ is another orbit of dimension one higher than the dimension of $Q$.

\begin{definition}
The \textbf{weak closure order} (or simply the \textbf{weak order}) on $K$-orbits is the partial order generated by relations of the form $Q \prec Q' \Leftrightarrow Q' = s_{\ga} \cdot Q \neq Q$, for $\ga \in \Delta$.  Equivalently, we may speak of the weak ordering on orbit \textit{closures}.  Supposing that $Y,Y'$ are the closures of orbits $Q,Q'$, respectively, we say that $Y' = s_{\ga} \cdot Y$ if and only if $Q' = s_{\ga} \cdot Q$, if and only if $Y' = \pi_{\ga}^{-1}(\pi_{\ga}(Y))$.
\end{definition} 

Let $Q,Q',Y,Y'$ be as above.  If $Y' = s_{\ga} \cdot Y \neq Y$, then the simple root $\ga$ can be categorized as either ``complex" or ``non-compact imaginary" for $Y$.  (See \cite{Richardson-Springer-90} for this terminology.)  In the non-compact imaginary case, $\ga$ is said to be of ``type I" or ``type II" depending on whether $Q$ is fixed by the ``cross action" of $s_{\ga}$.

\begin{definition}
The \textbf{cross action}, denoted $\times$, of $W$ on $K \backslash G/B$ is defined as follows:
\[ w \times (K \cdot gB) = K \cdot g\dot{w}^{-1} B, \]
where $\dot{w}$ denotes a representative of $w$ in $N_G(T)$.
 
If $s_{\ga} \cdot Y \neq Y$, and if $\ga$ is non-compact imaginary for $Y$, then $\ga$ is said to be of \textbf{type I} if $s_{\ga} \times Q \neq Q$, and of \textbf{type II} if $s_{\ga} \times Q = Q$.
\end{definition}

\begin{remark}
Note that the cross action is independent of the choice of $\dot{w}$, since for $t \in T$,
\[ (\dot{w}t)^{-1}B = t^{-1} \dot{w}^{-1}B = \dot{w}^{-1} (\dot{w} t^{-1} \dot{w}^{-1})B = \dot{w}^{-1} t' B = \dot{w}^{-1} B, \]
since $t' = \dot{w} t^{-1} \dot{w}^{-1}$ is an element of $T$ (hence an element of $B$), $\dot{w}$ being an element of $N_G(T)$.
\end{remark}

In \cite{Brion-01}, the poset graph for the set of orbit closures equipped with the weak order is endowed with additonal data, as follows:  Whenever $Y' = s_{\ga} \cdot Y$, the directed edge originating at $Y$ and terminating at $Y'$ gets a label of $\ga$.  Moreover, if $\ga$ is non-compact imaginary type II for $Q$, this edge is double.  (In all other cases, the edge is simple.)  The double edge is meant to indicate that when $\ga$ is a non-compact imaginary type II root, the restriction $\pi_{\ga}|_Y$ has degree $2$ over its image.  (In all other cases, $\pi_{\ga}|_Y$ is birational over its image.)

If $w \in W$, with $s_{i_1} \hdots s_{i_k}$ a reduced expression for $w$, set 
\[ w \cdot Y = s_{i_1} \cdot (s_{i_2} \cdot \hdots (s_{i_k} \cdot Y) \hdots ). \]
This is well-defined, independent of the choice of reduced expression for $w$, and defines an action of a certain monoid $M(W)$ on the set of $K$-orbit closures (\cite{Richardson-Springer-90}).  As a set, the monoid $M(W)$ is comprised of elements $m(w)$, one for each $w \in W$.  The multiplication on $M(W)$ is defined inductively by
\[ m(s)m(w) = 
\begin{cases}
	m(sw) & \text{ if $l(sw) > l(w)$}, \\
	m(w) & \text{ otherwise.}
\end{cases} \]

(For the sake of simplicity, we denote this action by $w \cdot Y$, as opposed to $m(w) \cdot Y$.  However, we emphasize that this defines an action of $M(W)$, and not of $W$.)

Suppose that $Y$ is a $K$-orbit closure on $G/B$ of codimension $d$.  Define the following subset of $W$:
\[ W(Y) := \{ w \in W \mid w \cdot Y = G/B \text{ and } l(w) = d\}. \]
(Note that in this definition, ``$G/B$" refers to the closure of the dense, open orbit.)  Elements of $W(Y)$ are precisely those $w$ such that there is a path connecting $Y$ to the top vertex of the weak order graph, the product of whose edge labels is $w$.  For any $w \in W(Y)$, denote by $D(w)$ the number of double edges in such a path.  (Although there may be more than one such path, each corresponding to a different reduced expression for $w$, any such path has the same number of double edges, so $D(w)$ \textit{is} well-defined.  See \cite[Lemma 5]{Brion-01}.)

With all of this defined, we now recall a theorem of \cite{Brion-01} which is used in \cite{Wyser-11a} to obtain positive rules for certain Schubert constants $c_{u,v}^w$ in type $A$.  In the present paper, we use it again to obtain analogous rules in types $CD$.
\begin{theorem}[\cite{Brion-01}]\label{thm:brion}
Let $Y$ be a $K$-orbit closure on $G/B$.  In $H^*(G/B)$, the fundamental class of $Y$ is expressed in the Schubert basis as follows:
\[ [Y] = \displaystyle\sum_{w \in W(Y)} 2^{D(w)} S_w. \]
\end{theorem}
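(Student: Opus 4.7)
The plan is to prove the formula by induction on the codimension $d$ of $Y$ in $G/B$, using the $\bbP^1$-bundle $\pi_\alpha : G/B \to G/P_\alpha$ to propagate information down the weak order from the open orbit.

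For the base case $d = 0$, we have $Y = G/B$, so $[Y] = 1 = S_e$, and $W(Y) = \{e\}$ with $D(e) = 0$; both sides of the claim equal $1$. Assume henceforth the formula for all $K$-orbit closures of codimension less than $d$, let $Y$ have codimension $d > 0$, and write $[Y] = \sum_v a_v S_v$. The strategy is to extract the coefficients $a_v$ by applying $\pi_\alpha^* \pi_{\alpha,*}$ for each simple root $\alpha$ and using the classical Schubert formula
\[ \pi_\alpha^* \pi_{\alpha,*} S_v = \begin{cases} S_{v s_\alpha} & \text{if } l(v s_\alpha) < l(v), \\ 0 & \text{otherwise.} \end{cases} \]

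On the geometric side, two cases arise. If $s_\alpha \cdot Y = Y$, then $Y$ is $P_\alpha$-stable, so $\pi_\alpha|_Y$ has $\bbP^1$ fibers and $\pi_\alpha^* \pi_{\alpha,*}[Y] = 0$; this immediately forces $a_v = 0$ for every $v$ having $\alpha$ as a right descent. If instead $s_\alpha \cdot Y = Y' \supsetneq Y$, then $\pi_\alpha|_Y$ is generically finite of degree $c \in \{1, 2\}$ over $\pi_\alpha(Y)$, with $c = 2$ exactly when $\alpha$ is non-compact imaginary type II for $Y$; since $\pi_\alpha^{-1}(\pi_\alpha(Y)) = Y'$, we obtain $\pi_\alpha^* \pi_{\alpha,*}[Y] = c \cdot [Y']$. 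Equating this with the image of $\sum_v a_v S_v$ under $\pi_\alpha^* \pi_{\alpha,*}$ and invoking the inductive expansion $[Y'] = \sum_{w' \in W(Y')} 2^{D(w')} S_{w'}$, we find
\[ a_{w' s_\alpha} = c \cdot 2^{D(w')} = 2^{D(w' s_\alpha)} \]
for every $w' \in W(Y')$ with $l(w' s_\alpha) > l(w')$, and $a_v = 0$ for all other $v$ having $\alpha$ as a right descent. The last equality in the displayed identity holds because the edge labeled $\alpha$ from $Y$ to $Y'$ is a double edge precisely when $\alpha$ is type II for $Y$, i.e. when $c = 2$. Every $v$ of length $d$ falls into exactly one of two cases: if $v$ has some right descent $\alpha$ with $s_\alpha \cdot Y = Y$, then $a_v = 0$ and also $v \notin W(Y)$; otherwise every right descent $\alpha$ of $v$ raises $Y$, the recursion yields $a_v = 2^{D(v)}$ when $v s_\alpha \in W(Y')$ for some such $\alpha$ (equivalently, when $v \in W(Y)$), and $a_v = 0$ when $v \notin W(Y)$. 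This matches the claimed formula.

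The principal obstacle is the consistency of the different last-letter decompositions: if $v$ admits reduced expressions ending in distinct simple reflections $s_{\alpha_1}$ and $s_{\alpha_2}$ (both rising above $Y$), the value extracted for $a_v$ must coincide under either choice. This reduces to the well-definedness of $D(v)$, independent of the reduced expression used, which is precisely the content of \cite[Lemma 5]{Brion-01}, applied as a black box. The faithful transmission of the double-edge count through the factor $c \in \{1,2\}$ is automatic from the definitions, so once consistency is in hand, both the correct support $W(Y)$ and the multiplicities $2^{D(w)}$ drop out simultaneously.
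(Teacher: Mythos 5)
This statement is quoted in the paper directly from \cite{Brion-01}; the paper itself supplies no proof, so there is nothing internal to compare against. Your argument is, in essence, a correct reconstruction of Brion's own proof: induction on codimension, extracting coefficients with the operators $\pi_{\ga}^*\pi_{\ga *}$, using that $\pi_{\ga}|_Y$ has degree $2$ over its image exactly for non-compact imaginary type II roots (a fact the paper also takes from \cite{Richardson-Springer-90,Brion-01}), and invoking \cite[Lemma 5]{Brion-01} for the well-definedness of $D(w)$. The only point stated a bit tersely is the consistency check that if $v \in W(Y)$ then \emph{every} right descent $\ga$ of $v$ satisfies $s_{\ga}\cdot Y \neq Y$ (take a reduced word for $v$ ending in $s_{\ga}$; since $l(v)=\mathrm{codim}\,Y$ and $v\cdot Y = G/B$, each step must raise dimension), which is what rules out a clash between your two cases; with that observation made explicit, the proof is complete and follows the original route.
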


\subsection{$GL(p,\C) \times GL(q,\C)$-orbits, $(p,q)$-clans, and $(p,q)$-pairs}
We now briefly recall results of \cite{Wyser-11a} which relate $GL(p,\C) \times GL(q,\C)$-orbit closures on the type $A$ flag variety to Richardson varieties.

Let $G = GL(n,\C)$, with $B$ the Borel subgroup of $G$ consisting of upper-triangular matrices.  Let $X = G/B$ be the type $A$ flag variety.  For $p,q$ with $p+q=n$, let $\theta = \text{int}(I_{p,q})$, where $I_{p,q}$ is the matrix
\[ \begin{pmatrix}
I_p & 0 \\
0 & -I_q \end{pmatrix}, \]
and where $\text{int}(g)$ denotes conjugation by $g$.

One checks easily that $K = G^{\theta}$ is isomorphic to $GL(p,\C) \times GL(q,\C)$, embedded diagonally as follows:
\[ K = \left\{
\left[
\begin{array}{cc}
K_{11} & 0 \\
0 & K_{22} \end{array}
\right] \in G
\ \middle\vert \ 
\begin{array}{c}
K_{11} \in GL(p,\C) \\
K_{22} \in GL(q,\C) \end{array}
\right\} .\]

As detailed in \cite{Matsuki-Oshima-90,Yamamoto-97,McGovern-Trapa-09}, and as recalled in \cite{Wyser-11a}, the set $K \backslash X$ of $K$-orbits is in bijection with the set of combinatorial objects known as $(p,q)$-clans:
\begin{definition}\label{def:clans}
A \textbf{$(p,q)$-clan} is a string of $n=p+q$ symbols, each of which is a $+$, a $-$, or a natural number.  The string must satisfy the following two properties:
\begin{enumerate}
	\item Every natural number which appears must appear exactly twice in the string.
	\item The difference in the number of plus signs and the number of minus signs in the string must be $p-q$.  (If $q > p$, then there should be $q-p$ more minus signs than plus signs.)
\end{enumerate}
\end{definition}

Such strings are considered only up to an equivalence which says, essentially, that it is the positions of matching natural numbers, rather than the actual values of the numbers, which determine a clan.  So, for instance, the clans $(1,2,1,2)$, $(2,1,2,1)$, and $(5,7,5,7)$ are all the same, since they all have matching natural numbers in positions $1$ and $3$, and also in positions $2$ and $4$.  On the other hand, $(1,2,2,1)$ is a different clan, since it has matching natural numbers in positions $1$ and $4$, and in positions $2$ and $3$.

For $(p,q)$-clans, there is an obvious notion of pattern avoidance.  This notion is used in, e.g., \cite{McGovern-09a,McGovern-Trapa-09} to give combinatorial criteria for rational smoothness of symmetric subgroup orbit closures in various cases.  In \cite{Wyser-11a}, pattern avoidance is used to identify certain $K$-orbit closures as Richardson varieties.

\begin{definition}
Given a $(p,q)$-clan $\gamma$ and a $(p',q')$-clan $\gamma'$ (with $p' \leq p$ and $q' \leq q$), $\gamma$ is said to \textbf{avoid the pattern} $\gamma'$ if there is no substring of $\gamma$ of length $p'+q'$ which is equal to $\gamma'$ as a clan.
\end{definition}

One of the main observations of \cite{Wyser-11a} is that the closure of any $K'$-orbit whose clan avoids the pattern $(1,2,1,2)$ is a Richardson variety.  The precise statement is as follows:
\begin{theorem}[{\cite[Theorem 6.4]{Wyser-11a}}]\label{thm:type-a-richardson}
Suppose that $\gamma=(c_1,\hdots,c_n)$ is a $(p,q)$-clan avoiding $(1,2,1,2)$.  Define permutations $u(\gamma),v(\gamma)$ as follows:
\begin{itemize}
	\item $u(\gamma)$ is the permutation which assigns the numbers $p,p-1,\hdots,1$, in order, to those positions $i$ for which $c_i$ is either a $+$ or the second occurrence of some natural number, and the numbers $n,n-1,\hdots,p+1$, in order, to the remaining positions.
	\item $v(\gamma)$ is the permutation which assigns the numbers $1,\hdots,p$, in order, to those positions $i$ for which $c_i$ is either a $+$ or the first occurrence of some natural number, and the numbers $p+1,\hdots,n$, in order, to the remaining positions.
\end{itemize}
Then the orbit closure $Y_{\gamma} = \overline{Q_{\gamma}}$ is the Richardson variety $X_{u(\gamma)}^{v(\gamma)}$.
\end{theorem}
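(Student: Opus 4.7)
The plan is to prove $Y_\gamma = X_{u(\gamma)}^{v(\gamma)}$ in three steps: (1) identify the rank-data locus $V_\gamma \subseteq G/B$ associated to $\gamma$ with the Richardson variety $X_{u(\gamma)}^{v(\gamma)}$; (2) check that $Q_\gamma \subseteq V_\gamma$; (3) show that under $(1,2,1,2)$-avoidance, $Q_\gamma$ is dense in $V_\gamma$, so that $Y_\gamma = \overline{Q_\gamma} = V_\gamma = X_{u(\gamma)}^{v(\gamma)}$.

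For step (1), define $V_\gamma$ by the closed rank inequalities $\dim(F_i \cap E_+) \geq r_i^+$ and $\dim(F_i \cap E_-) \geq r_i^-$ for each $i$, where $E_\pm$ are the $\pm 1$-eigenspaces of $I_{p,q}$ and the $r_i^\pm$ are the rank values prescribed by $\gamma$. A direct combinatorial check shows that the permutations $u(\gamma), v(\gamma)$ given by the formulas in the statement satisfy $X_{u(\gamma)} \cap X^{v(\gamma)} = V_\gamma$; the explicit forms of $u(\gamma)$ and $v(\gamma)$ are dictated by this identification. Step (2) is immediate from the clan--orbit correspondence: every flag in $Q_\gamma$ has exactly the prescribed ranks, hence lies in $V_\gamma$.

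Step (3) is where the $(1,2,1,2)$-avoidance enters. The locus $V_\gamma$ is a union of $K$-orbit closures, corresponding to the finitely many clans sharing the rank data of $\gamma$. Among these, exactly one --- call it $\gamma^\circ$ --- is non-crossing, and a parametric computation shows that $Q_{\gamma^\circ}$ is the unique dense orbit in $V_\gamma$; any crossing clan with the same rank table imposes additional closed incidence conditions on flags (schematically, that $F_{i_1}$ lie in the decomposable subspace $(F_j \cap E_+) \oplus (F_j \cap E_-)$ for each crossing pair at positions $(i_1, j)$), and hence has strictly smaller orbit. The $(1,2,1,2)$-avoidance hypothesis is precisely the condition that $\gamma$ be non-crossing, so $\gamma = \gamma^\circ$ and $\overline{Q_\gamma} = V_\gamma$. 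A dimension check by induction on the number of natural-number pairs confirms $\dim Q_\gamma = l(u(\gamma)) - l(v(\gamma)) = \dim V_\gamma$, consistent with density.

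The main obstacle is step (3): establishing that the non-crossing clan's orbit is the dense one in $V_\gamma$. Without the pattern-avoidance hypothesis, $\overline{Q_\gamma}$ sits as a proper closed subvariety of $V_\gamma$, cut out by the extra incidence conditions from the crossing-pair structure, and fails to be the Richardson variety.
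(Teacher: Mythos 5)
Your outline is essentially the same argument as in the cited source \cite{Wyser-11a} (the present paper only quotes this theorem without reproving it): identify the locus cut out by the inequalities $\dim(F_i \cap E_\pm) \geq \gamma(i;\pm)$ with $X_{u(\gamma)} \cap X^{v(\gamma)}$ via the extremal coset representatives $u(\gamma),v(\gamma)$, observe $\overline{Q_\gamma}$ lies inside it, and use $(1,2,1,2)$-avoidance (equivalently, the non-crossing matching) to get equality. The density step is most cleanly closed by exactly the dimension count you mention in passing --- $\dim Q_\gamma = l(u(\gamma)) - l(v(\gamma))$ together with irreducibility of the Richardson variety --- which makes the informal ``crossing clans impose extra closed conditions'' discussion unnecessary.
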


As an example, if $\gamma = (+,-,1,1,2,2)$, then $Y_{\gamma} = X_{365241}^{142536}$.

Let $u=u(\gamma)$, $v=v(\gamma)$.  Since $[Y_{\gamma}]$ is the product $[X_{w_0u}] \cdot [X_v]$, Theorem \ref{thm:brion} and some case-specific knowledge of the combinatorics of $K \backslash X$ give a positive rule for computing $c_{w_0u,v}^w$ for any $w$.  Note that $(w_0u,v)$ have the property that the one-line notation for $w_0u$ is a ``shuffle" of $1,\hdots,q$ and $q+1,\hdots,n$ --- that is, in the one-line notation, $1,\hdots,q$ occur in order, and $q+1,\hdots,n$ occur in order.  Likewise, $v$ is a shuffle of $1,\hdots,p$ and $p+1,\hdots,n$.  In \cite{Wyser-11a}, $(w_0u,v)$ is referred to as a ``$(p,q)$-pair".

There is a converse to Theorem \ref{thm:type-a-richardson}, which says that any Richardson variety $X_u^v$ such that $(w_0u,v)$ form a $(p,q)$-pair is the closure of a $K$-orbit on $X$.

\begin{prop}[{\cite[Proposition 7.2]{Wyser-11a}}]\label{prop:type-a-richardson-converse}
Suppose that $(w_0u,v)$ is a $(p,q)$-pair with $u \geq v$.  The Richardson variety $X_u^v$ is the $K$-orbit closure $Y_{\gamma(u,v)}$, where $\gamma(u,v)$ is a $(p,q)$-clan avoiding the pattern $(1,2,1,2)$.  The clan $\gamma(u,v)$ is produced from $u,v$ by the following recipe:  First, create an ``FS-pattern" $(e_1,\hdots,e_n)$ from $u,v$ as follows:
\begin{enumerate}
	\item If $u(i),v(i) \leq p$, set $e_i = +$.
	\item If $u(i),v(i) > p$, set $e_i = -$.
	\item If $u(i) > p$, $v(i) \leq p$, set $e_i = F$.
	\item If $u(i) \leq p$, $v(i) > p$, set $e_i = S$.
\end{enumerate}

From this FS-pattern, produce $\gamma(u,v)=(c_1,\hdots,c_n)$ by the following steps:
\begin{enumerate}
	\item First, for all $i$ with $e_i = \pm$, set $c_i = e_i$.
	\item Next, for all $i$ with $e_i = F$, set $c_i$ to be a distinct natural number.  (If there are $m$ occurrences of F, these may as well be the numbers $1,\hdots,m$, in order from left to right.)
	\item Finally, starting at the left and moving to the right, for all $i$ with $e_i = S$, set $c_i$ to be the mate of the closest natural number to the left of position $i$ which does not yet have a mate.
\end{enumerate}
\end{prop}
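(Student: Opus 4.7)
The plan is to verify that $\gamma(u,v)$ is a legitimate $(p,q)$-clan avoiding $(1,2,1,2)$ and that applying the recipe of Theorem~\ref{thm:type-a-richardson} to $\gamma(u,v)$ returns the original pair $(u,v)$. Once both are established, Theorem~\ref{thm:type-a-richardson} immediately identifies $Y_{\gamma(u,v)}$ with $X_u^v$.

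First I would check that step (3) of the construction is well-defined; that is, whenever $e_i = S$ there is an unpaired $F$ strictly to the left. Writing $F_i, S_i$ for the number of $F$'s and $S$'s in positions $1,\ldots,i$, a direct count gives
\[ F_i - S_i = \#\{j \leq i : v(j) \leq p\} - \#\{j \leq i : u(j) \leq p\}. \]
The hypothesis $u \geq v$ in the Bruhat order, via the tableau criterion applied at the threshold $k = p$, is exactly the statement that this quantity is non-negative for every $i$. Since $u,v$ are both permutations of $\{1,\ldots,n\}$, we also have $F_n = S_n$, so every $F$ eventually gets paired. This is the main step and the only place the Bruhat hypothesis is used in an essential way.

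Next I would verify that $\gamma(u,v)$ is a bona fide $(p,q)$-clan. Every natural number appears exactly twice by construction, and a short bookkeeping argument with the counts of $+, -, F, S$ (together with $\#F = \#S$ from the previous step) gives $\#(+) - \#(-) = p - q$. To see avoidance of $(1,2,1,2)$ I would argue that the greedy rule of step (3) produces only nested or disjoint pairs: if $i_1 < i_2$ are $F$-positions matched respectively to $S$-positions $j_1, j_2$, then the two cases $j_1 < j_2$ and $j_1 > j_2$ force either $i_1 < j_1 \leq i_2 < j_2$ (disjoint) or $i_1 < i_2 < j_2 < j_1$ (nested), both incompatible with a crossed $(1,2,1,2)$ pattern.

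Finally I would check $u(\gamma(u,v)) = u$ and $v(\gamma(u,v)) = v$. The positions with $e_i \in \{+, S\}$ are precisely those with $u(i) \leq p$, and the recipe of Theorem~\ref{thm:type-a-richardson} places the values $p, p-1, \ldots, 1$ at these positions in left-to-right order. The shuffle hypothesis on $w_0 u$ forces $u$ to place exactly those values in exactly that order at exactly those positions (and $n, n-1, \ldots, p+1$ on the complement), yielding $u(\gamma(u,v)) = u$. The identical argument, with $F$ in place of $S$ and $v$ in place of $u$, gives $v(\gamma(u,v)) = v$, and Theorem~\ref{thm:type-a-richardson} then identifies $Y_{\gamma(u,v)}$ with $X_u^v$ as claimed.
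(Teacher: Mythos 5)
Your proof is correct and proceeds along essentially the same lines as the cited argument (the paper itself gives no proof here, deferring to \cite{Wyser-11a}): you check that the FS-recipe yields a valid $(p,q)$-clan whose number pairs are non-crossing, with the hypothesis $u \geq v$ entering exactly where it must (to guarantee each $S$ finds an unmatched number to its left, and that all numbers get mates since $F_n = S_n$), and then that the assignments of Theorem \ref{thm:type-a-richardson} applied to this clan recover $u$ and $v$, whence $Y_{\gamma(u,v)} = X_u^v$ follows immediately. Two cosmetic points: the threshold-$p$ count inequality is \emph{implied by} (not literally equivalent to) $u \geq v$, which is all you need, and the non-crossing claim deserves its one-line justification, namely that a crossing $i_1 < i_2 < j_1 < j_2$ would contradict $j_1$ being matched to the \emph{closest} unmatched number on its left, since $i_2$ is still unmatched when $j_1$ is processed.
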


As an example, consider the Richardson variety $X_{365241}^{142536}$.  The FS-pattern produced is $(+,-,F,S,F,S)$.  The clan produced from this FS-pattern by the steps outlined above is:
\begin{enumerate}
	\item $(+,-,*,*,*,*)$
	\item $(+,-,1,*,2,*)$
	\item $(+,-,1,1,2,2)$
\end{enumerate}

Thus we recover the fact, noted above, that $X_{365241}^{142536} = Y_{(+,-,1,1,2,2)}$.

Proposition \ref{prop:type-a-richardson-converse} tells us that the rule of \cite[Theorem 7.5]{Wyser-11a} for structure constants in type $A$ applies to all $c_{u,v}^w$ where $u,v$ form a $(p,q)$-pair with $w_0u \geq v$.  (Note that if $w_0u \not\geq v$, then $c_{u,v}^w$ is automatically zero.)

\section{Type $C$}\label{sect:type-c}
In this section, we apply the results recalled in the previous section to obtain a positive rule for Schubert constants in type $C$.  In the next section, we do the same in type $D$.

We realize the complex symplectic group $G = Sp(2n,\C)$ as the isometry group of the symplectic form
\[ \left\langle x,y \right\rangle = \displaystyle\sum_{i=1}^n x_i y_{2n+1-i} - \displaystyle\sum_{i=n+1}^{2n} x_i y_{2n+1-i}. \]
That is, $G$ is the set of all matrices $g$ such that $g^t J_{n,n} g = J_{n,n}$, where
\[ J_{n,n} = 
\begin{pmatrix}
0 & J_n \\
-J_n & 0
\end{pmatrix}, \]
with $J_n$ the $n \times n$ antidiagonal matrix $(\delta_{i,n+1-j})$.

Let $B \subseteq G$ be the Borel subgroup of upper triangular elements of $G$.  The flag variety $G/B$ naturally identifies with the set of flags which are isotropic with respect to the form $\langle \cdot, \cdot \rangle$.  (A flag $F_{\bullet} = (F_0 \subset F_1 \subset \hdots \subset F_{2n})$ is isotropic with respect to $\langle \cdot, \cdot \rangle$ if and only if $F_{2n+1-i} = F_i^{\perp}$ for $i=1,\hdots,n$, where orthogonal complements are taken with respect to $\langle \cdot, \cdot \rangle$.)  Thus the type $C$ flag variety $X$ is naturally a closed subvariety of the type $A$ flag variety $X'=G'/B'$, where $G' = GL(2n,\C)$ and $B' \subseteq G'$ is the Borel subgroup of upper triangular elements of $G'$.  (The embedding $X' \hookrightarrow X$ corresponds to the obvious map $G/B \hookrightarrow G'/B'$ taking $gB$ to $gB'$.)

Recall that $K' = GL(n,\C) \times GL(n,\C) \subseteq G'$ is $(G')^{\theta'}$ for $\theta' = \text{int}(I_{n,n})$.  One checks easily that $G$ is stable under $\theta'$, so that $\theta := \theta'|_{G}$ is an involution of $G$.  Let $K = G^{\theta} = G \cap K'$.  It is a straightforward calculation to see that
\[ K = 
\left\{ 
\begin{pmatrix}
g & 0 \\
0 & J_n \ (g^t)^{-1} J_n
\end{pmatrix}
\ \middle\vert \ 
g \in GL(n,\C) \right\} \cong GL(n,\C). \]

We wish to see that certain of the $K$-orbit closures on $X$ are Richardson varieties.  Namely, suppose that $Y'=\overline{Q'}$ is a $K'$-orbit closure on $X'$ which is a Richardson variety, and suppose that $Q' \cap X = Q \neq \emptyset$.  Then $Y=\overline{Q}$ is a $K$-orbit closure on $X$ which coincides with a Richardson variety.

To see this, we must first note that in the above notation, $Q$ is a $K$-orbit on $X$.  Thus we start by identifying the $K$-orbits on $X$.  It is clear that the intersection of a $K'$-orbit on $X'$ with $X$, if non-empty, is stable under $K$ and hence is at least a union of $K$-orbits on $X$.  In fact, each such non-empty intersection is a \textit{single} $K$-orbit on $X$.  (We briefly describe the idea of the proof of this below, see Proposition \ref{prop:skew-symmetric-intersections}).  This means that $K$-orbits on $X$ are in one-to-one correspondence with $K'$-orbits on $X'$ which intersect $X$.  And since $K'$-orbits on $X'$ are parametrized by $(n,n)$-clans, $K$-orbits on $X$ are parametrized by the subset of $(n,n)$-clans $\gamma$ having the property that $Q_{\gamma}' \cap X \neq \emptyset$.  ($Q_{\gamma}'$ denotes the $K'$-orbit on $X'$ corresponding to $\gamma$.)  As it turns out, this last condition amounts to $\gamma$ possessing a simple combinatorial property.

\begin{definition}
An $(n,n)$-clan $\gamma=(c_1,\hdots,c_{2n})$ is \textbf{skew-symmetric} if the clan $(c_{2n},\hdots,c_1)$ obtained from $\gamma$ by reversing the string is the ``negative" of $\gamma$, meaning it is the same clan except with all signs flipped.  More precisely, $\gamma$ is skew-symmetric if and only if for each $i=1,\hdots,n$,
\begin{enumerate}
	\item If $c_i$ is a sign, then $c_{2n+1-i}$ is the opposite sign.
	\item If $c_i$ is a number, then $c_{2n+1-i}$ is also a number, and if $c_{2n+1-i} = c_j$, then $c_i = c_{2n+1-j}$.
\end{enumerate}
\end{definition}

\begin{prop}\label{prop:skew-symmetric-intersections}
Let $\gamma$ be an $(n,n)$-clan, with $Q_{\gamma}'$ the corresponding $K'$-orbit on $X'$.  Then $Q_{\gamma}' \cap X \neq \emptyset$ if and only if $\gamma$ is skew-symmetric.  Furthermore, if $\gamma$ is skew-symmetric, then $Q_{\gamma}' \cap X$ is a \textit{single} $K$-orbit on $X$.  Thus $K \backslash X$ is parametrized by the set of all skew-symmetric $(n,n)$-clans.
\end{prop}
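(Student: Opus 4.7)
My plan has three parts. First, using the parametrization of $K'$-orbits on $X'$ by $(n,n)$-clans, I would fix for each $\gamma$ a standard flag representative $F_\bullet^\gamma \in Q'_\gamma$ built from the basis of $\C^{2n}$ in the manner dictated by $\gamma$: signs $\pm$ prescribe steps adding basis vectors from the $\pm 1$-eigenspaces $V_\pm$ of $I_{n,n}$, and matched numerals prescribe steps adding ``mixed'' basis vectors involving both $V_+$ and $V_-$. The clan of any flag $F_\bullet$ is then recovered from the sequences $a(i) := \dim(F_i \cap V_+)$ and $b(i) := \dim(F_i \cap V_-)$, together with the pairing data of mixed steps.

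For the necessity of skew-symmetry, suppose $F_\bullet \in Q'_\gamma \cap X$. Since $V_\pm$ are Lagrangian for the symplectic form (so $V_\pm^{\perp} = V_\pm$), the isotropy $F_i^{\perp} = F_{2n-i}$ gives
\[
\dim(F_{2n-i} \cap V_\pm) \;=\; \dim(F_i^{\perp} \cap V_\pm^{\perp}) \;=\; \dim\bigl((F_i + V_\pm)^{\perp}\bigr) \;=\; n - i + \dim(F_i \cap V_\pm).
\]
Differencing this relation yields $\Delta a(2n+1-i) = 1 - \Delta a(i)$, and similarly for $b$. Reading off the clan, the symbol at position $2n+1-i$ is obtained from that at position $i$ by swapping $V_+ \leftrightarrow V_-$ (which flips signs) and by interchanging first and second occurrences of matched numerals; this is exactly the skew-symmetry of $\gamma$.

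For the converse together with the single-$K$-orbit claim, I would construct for each skew-symmetric $\gamma$ an explicit isotropic standard representative $F_\bullet^\gamma$. The skew-symmetric pairing $i \leftrightarrow 2n+1-i$ matches each $+$ with a $-$ and each matched numeral pair $\{i,j\}$ with $\{2n+1-j,\, 2n+1-i\}$, allowing a basis $v_1,\ldots,v_{2n}$ of $\C^{2n}$ with $\langle v_i,\, v_{2n+1-j}\rangle = \pm \delta_{ij}$ to be chosen so that each $v_i$ sits in $V_+$, $V_-$, or a ``mixed'' position according to the clan symbol; then $F_i^\gamma := \mathrm{span}(v_1,\ldots,v_i)$ is isotropic with clan $\gamma$. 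For uniqueness as a $K$-orbit, given any second isotropic flag $F'_\bullet \in Q'_\gamma$, write $F'_\bullet = k' F_\bullet^\gamma$ with $k' \in K'$ and separately $F'_\bullet = g F_\bullet^\gamma$ for some $g \in G = Sp(2n,\C)$; then $(k')^{-1} g \in \mathrm{Stab}_{G'}(F_\bullet^\gamma)$. A direct computation of this stabilizer using the explicit $F_\bullet^\gamma$, together with its intersections with $K'$ and with $G$, lets one modify $k'$ within $\mathrm{Stab}_{K'}(F_\bullet^\gamma)$ to an element of $K = K' \cap G$ that also sends $F_\bullet^\gamma$ to $F'_\bullet$.

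The chief obstacle is the combinatorial bookkeeping for the matched-numeral case, both in verifying that the reversal rule $\Delta a \mapsto 1 - \Delta a$ translates to skew-symmetry of paired numerals and in producing the canonical isotropic representative. The final stabilizer adjustment for the single-$K$-orbit step reduces to a direct linear-algebraic calculation once the explicit representative is in hand.
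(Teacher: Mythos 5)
Your strategy is genuinely different from the paper's, but the decisive step is not actually proved. The paper does not argue claim by claim the way you do: it cites Yamamoto for the equivalence ``$Q'_\gamma\cap X\neq\emptyset$ iff $\gamma$ is skew-symmetric,'' and for the single-orbit claim it only sketches a counting argument — one considers the whole inner class of involutions (so $GL(n,\C)$ together with the groups $Sp(2p,\C)\times Sp(2q,\C)$, $p+q=n$), identifies the disjoint union $\coprod_i K_i\backslash X$ with the one-sided parameter space of Adams--du Cloux, and compares cardinalities to rule out any intersection splitting into several orbits, with details deferred to another paper. That is precisely because the single-orbit statement is the hard part, and it is where your proposal has a genuine gap. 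Your reduction ``$F'_\bullet=k'F^\gamma_\bullet=gF^\gamma_\bullet$, so $(k')^{-1}g\in\mathrm{Stab}_{G'}(F^\gamma_\bullet)$'' is fine, but the assertion that one can then ``modify $k'$ within $\mathrm{Stab}_{K'}(F^\gamma_\bullet)$ to an element of $K$'' is literally the statement $k'\in K\cdot\mathrm{Stab}_{K'}(F^\gamma_\bullet)$, which is equivalent to what you are trying to prove; no mechanism is offered for why the modification exists. To make this route work you would have to show that \emph{every} isotropic flag in $Q'_\gamma$ admits a basis adapted simultaneously to the flag, to the symplectic form, and to the decomposition $V_+\oplus V_-$ in the normal form dictated by $\gamma$ — a genuine normal-form/classification argument in the spirit of Yamamoto's proofs, not a routine stabilizer computation.

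There is also a smaller, acknowledged but unresolved, gap in your necessity argument. The identity $\dim(F_{2n-i}\cap V_\pm)=n-i+\dim(F_i\cap V_\pm)$ (correct, since $V_\pm$ are Lagrangian and $F_i^\perp=F_{2n-i}$) gives $\Delta a(2n+1-i)=1-\Delta a(i)$ and likewise for $b$, which shows that a $+$ at position $i$ forces a $-$ at $2n+1-i$ and that first occurrences of numerals correspond to second occurrences. But it does not give condition (2) of skew-symmetry, namely that the \emph{pairing} is reversed (if $c_{2n+1-i}=c_j$ then $c_i=c_{2n+1-j}$): that information is not contained in the one-index invariants $a(i),b(i)$ at all, and requires the two-index rank invariants that determine which first occurrence is matched with which second occurrence, together with a corresponding orthogonality identity for those. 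Your converse construction of an explicit isotropic representative for a skew-symmetric clan is plausible and is essentially the standard argument, so that portion is fine in outline; the matched-numeral bookkeeping and, above all, the single-orbit step are where the proposal falls short of a proof.
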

\begin{proof}
The first claim is proved in \cite{Yamamoto-97}.  We do not prove the second claim here, but briefly indicate what is involved in one possible proof.  (The author thanks Peter Trapa for explaining this general line of argument to him.)  One considers an entire ``inner class" of involutions $\theta_i$, and lets $K_i = G^{\theta_i}$.  (To be more specific, in this case, the relevant $K_i$'s turn out to be $GL(n,\C)$ together with the groups $Sp(2p,\C) \times Sp(2q,\C)$ as $p,q$ range over all possibilities with $p+q=n$.)  Each $K_i$ can be realized as $G \cap K_i'$, with each $K_i'$ isomorphic to $GL(p,\C) \times GL(q,\C)$ for some $p,q$.  Given this setup, one can consider the disjoint union of all orbit sets, $\coprod_i K_i \backslash X$.  The resulting set of orbits is in bijection with the so-called ``one-sided parameter space" $\caX$, defined in \cite{Adams-DuCloux-09}.  The cardinality of the latter set can be computed explicitly, allowing for a counting argument which shows that no intersection of a $K_i'$-orbit on $X'$ with $X$ can be anything other than a single $K_i$-orbit (for any $K_i$).  

A detailed argument of this type will appear in another paper currently in preparation by the author.
\end{proof}

Suppose that $Y_{\gamma}' = \overline{Q_{\gamma}'}$ is a $K'$-orbit closure on $X'$, with $\gamma$ a skew-symmetric $(n,n)$-clan avoiding the pattern $(1,2,1,2)$.  We know that $Y_{\gamma}'$ is a Richardson variety in $X'$.  By the previous proposition, we also know that $Y_{\gamma} = \overline{Q_{\gamma}} = \overline{Q_{\gamma}' \cap X} = Y_{\gamma}' \cap X$.  We now wish to see that $Y_{\gamma}$ is a Richardson variety in $X$.

Let $W$ be the Weyl group for $G$, and $W' = S_{2n}$ the Weyl group for $G'$.  Recall that $W$ consists of signed permutations of $[n]$ (changing any number of signs).  A signed permutation of $[n]$ is a bijection $\sigma$ from the set $\{\pm 1,\hdots,\pm n\}$ to itself having the property that
\[ \sigma(-i) = -\sigma(i) \]
for all $i$.  We shall denote signed permutations in one-line notation with bars over some of the numbers to indicate negative values.  For example, the one-line notation $3\overline{1}2$ represents the signed permutation which sends $1$ to $3$, $2$ to $-1$, and $3$ to $2$.

There is a natural embedding $W \hookrightarrow W'$ as permutations $\pi$ of $[2n]$ having the property that
\[ \pi(2n+1-i) = 2n+1-\pi(i) \]
for all $i \in [n]$.  This embedding takes a signed permutation $\sigma$ of $[n]$ to the permutation $\pi$ of $[2n]$ defined by
\[ \pi(i) = 
\begin{cases}
\sigma(i) & \text{ if $\sigma(i) > 0$} \\
2n+1-|\sigma(i)| & \text{ otherwise,}
\end{cases} \]
and $\pi(2n+1-i) = 2n+1-\pi(i)$ for $i=1,\hdots,n$.

To avoid any confusion in terminology, we will refer to elements of $S_{2n}$ in the image of this embedding as ``signed elements of $S_{2n}$".  For any $w \in W$, we will denote its image in $S_{2n}$ by $w'$.  (Conversely, any permutation denoted $w'$, $u'$, etc. should be assumed to be the image of a corresponding element $w$, $u$, etc. of $W$.)

\begin{prop}\label{prop:skew_symmetric_clans}
Suppose that $\gamma=(c_1,\hdots,c_{2n})$ is a skew-symmetric $(n,n)$-clan avoiding the pattern $(1,2,1,2)$.  Let $u(\gamma),v(\gamma) \in S_{2n}$ be as described in the statement of Theorem \ref{thm:type-a-richardson}.  Then $u(\gamma),v(\gamma)$ are signed elements of $S_{2n}$.
\end{prop}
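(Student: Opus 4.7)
The plan is to translate the combinatorial skew-symmetry of $\gamma$ into a symmetry on the position sets that govern the definitions of $u(\gamma)$ and $v(\gamma)$, and then read off the defining relation $\pi(2n+1-i) = 2n+1-\pi(i)$ directly from the ordering of those positions. Throughout, let $\iota$ denote the involution $i \mapsto 2n+1-i$ on $\{1,\ldots,2n\}$, which is order-reversing and fixed-point-free.

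The first step is to observe that skew-symmetry makes $\iota$ interact with the clan entries in a very uniform way. Directly from the definition, $c_i$ is a sign if and only if $c_{\iota(i)}$ is the opposite sign, and $c_i$ is a natural number if and only if $c_{\iota(i)}$ is also a natural number. The less obvious part is that $c_i$ is the \emph{first} occurrence of its value precisely when $c_{\iota(i)}$ is the \emph{second} occurrence of its value: if $\{i,k\}$ is the matched pair containing $i$ and one applies condition~(2) of the skew-symmetry definition with $j$ chosen so that $c_{\iota(i)} = c_j$, then $c_i = c_{\iota(j)}$, whence $\iota(j)=k$ and so $\iota(i)$ is matched to $\iota(k)$. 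Because $\iota$ reverses the order, the smaller (first-occurrence) element of $\{i,k\}$ is sent to the larger (second-occurrence) element of $\{\iota(i),\iota(k)\}$.

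Consequently, the set $S_u := \{i \mid c_i = + \text{ or } c_i \text{ is a second occurrence}\}$ of positions receiving the values $n, n-1, \ldots, 1$ under $u(\gamma)$ is carried by $\iota$ bijectively onto its complement $L_u$ (the positions receiving $2n, 2n-1, \ldots, n+1$), and the analogous statement holds for the sets $S_v, L_v$ attached to $v(\gamma)$. Listing $S_u = \{i_1 < \cdots < i_n\}$ and $L_u = \{j_1 < \cdots < j_n\}$, the order-reversing property of $\iota$ then gives $\iota(i_k) = j_{n+1-k}$.

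The verification of the signed-permutation condition is immediate from this: $u(\gamma)(i_k) = n+1-k$ by construction, so
\[
u(\gamma)(\iota(i_k)) = u(\gamma)(j_{n+1-k}) = 2n+1-(n+1-k) = n+k = 2n+1 - u(\gamma)(i_k),
\]
and an identical calculation handles $v(\gamma)$ using $S_v$ and $L_v$. The whole argument is mostly bookkeeping; the one step requiring real care is the claim that skew-symmetry genuinely forces $\iota$ to swap first and second occurrences, which is precisely where the content of condition~(2) of the skew-symmetry definition is used.
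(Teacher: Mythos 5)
Your proof is correct and takes essentially the same approach as the paper: you translate skew-symmetry into the statement that $i \mapsto 2n+1-i$ carries the $+$/second-occurrence positions order-reversingly onto the $-$/first-occurrence positions, and then read off $u(\gamma)(2n+1-i)=2n+1-u(\gamma)(i)$ from the assigned values, which is exactly the paper's left-to-right/right-to-left counting argument recast via the sets $S_u$ and $L_u$. The only difference is cosmetic: you spell out, using condition (2) of the skew-symmetry definition, why first and second occurrences are interchanged under $i \mapsto 2n+1-i$, a point the paper leaves implicit.
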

\begin{proof}
Consider $u$.  Moving from left to right from $c_1$ to $c_n$, the $i$th character which is either a plus sign or a second occurrence of a natural number is assigned the value $n-i+1$.  Say this character is $c_k$, so that $u(k) = n-i+1$.  Then due to skew-symmetry, as we move right to left from $c_{2n}$ to $c_{n+1}$, $c_{2n+1-k}$ is the $i$th occurrence of a \textit{minus} sign or \textit{first} occurrence of a natural number, so it is assigned $n+i$, i.e. $u(2n+1-k) = n + i$.  This says that 
\[ u(2n+1-k) = n+i = 2n+1-(n-i+1) = 2n+1-u(k). \]

Likewise, moving left to right, the $i$th character which is either a minus sign or first occurrence of a natural number is assigned the value $2n-i+1$.  If this character is $c_k$, then $u(k) = 2n-i+1$.  Then moving right to left, the $i$th character which is either a plus sign or second occurrence of a natural number is $c_{2n+1-k}$, and this position is assigned the value $i$.  Thus $u(2n+1-k) = i$.  Then
\[ u(2n+1-k) = i = 2n+1-(2n-i+1) = 2n+1-u(k). \]

Thus $u(2n+1-k) = 2n+1-u(k)$ for all $k \in [n]$.  The argument for $v$ is virtually identical.
\end{proof}

The last proposition says in particular that any $K'$-orbit closure $Y'=\overline{Q'}$ which coincides with a Richardson variety, and which has the property that $Q' \cap X \neq \emptyset$, is of the form $X_{u'}^{v'}$ for $u,v \in W$.  Indeed, such an orbit closure is of the form $Y_{\gamma}'$ with $\gamma$ a skew-symmetric $(n,n)$-clan avoiding the pattern $(1,2,1,2)$.  We know that $Y_{\gamma}' = X_{u(\gamma)}^{v(\gamma)}$ by Theorem \ref{thm:type-a-richardson}, and we have just shown that $u(\gamma)$ and $v(\gamma)$ are of the form $u',v'$.  We wish to see now that the intersection of $Y_{\gamma}' = X_{u'}^{v'}$ with $X$ is the Richardson variety $X_u^v$.  We need the following fact regarding the Bruhat order on $W$, for which we refer to \cite[\S 3.3]{Billey-Lakshmibai-00}:

\begin{prop}[{\cite[\S 3.3]{Billey-Lakshmibai-00}}]
When $W$ is considered as a subset of $W'$ via the embedding described above, the Bruhat order on $W$ is the one induced by the Bruhat order on $W'$.
\end{prop}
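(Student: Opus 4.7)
The plan is to combine the subword characterization of Bruhat order with an explicit description of the embedding $W \hookrightarrow W'$ in terms of reduced expressions, then invoke the rank-matrix criterion for the reverse implication. Taking the standard type $C_n$ generators $s_1,\ldots,s_n$ of $W$ (with $s_i$ the adjacent transposition of positions $i,i+1$ without sign change for $i<n$, and $s_n$ the sign change on position $n$), the defining formula $\pi(2n+1-i)=2n+1-\pi(i)$ of the embedding gives
\[ s_i \longmapsto t_i\, t_{2n-i} \quad (1 \leq i \leq n-1), \qquad s_n \longmapsto t_n, \]
where $t_j=(j,j+1)$ is an adjacent transposition in $S_{2n}$; for $i<n$ the two factors $t_i$ and $t_{2n-i}$ commute.

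I would first verify that replacing each letter of a reduced expression $s_{i_1}\cdots s_{i_k}$ for $w\in W$ by its image yields a reduced expression for $w'\in W'$ of length $\sum_j \ell_{W'}(s_{i_j}')$. One checks this by counting inversions of the symmetric permutation $w'$: an inversion $(i,j)$ with $i+j \neq 2n+1$ is paired with the inversion $(2n+1-j,\,2n+1-i)$ and each such pair contributes $2$, while a ``self-symmetric'' inversion with $i+j=2n+1$ contributes $1$. Granted this, the forward direction $u \leq v \Rightarrow u' \leq v'$ follows immediately from the subword criterion: a reduced subword expressing $u$ inside a reduced expression for $v$ in $W$ translates letter by letter into a reduced subword expressing $u'$ inside a reduced expression for $v'$ in $W'$.

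The reverse direction $u' \leq v' \Rightarrow u \leq v$ is the main obstacle. The cleanest approach is via the tableau/rank-matrix criterion: $u' \leq v'$ in $S_{2n}$ holds iff $r_{u'}(i,j) \geq r_{v'}(i,j)$ for all $1 \leq i,j \leq 2n$, where $r_{w'}(i,j) = |\{k \leq i : w'(k) \leq j\}|$. For a symmetric permutation $w'$ in the image of $W$, the identity $w'(2n+1-a) = 2n+1-w'(a)$ forces a self-dual symmetry of the rank matrix, so that the $2n \times 2n$ system of inequalities is determined by the subsystem indexed by $1 \leq i,j \leq n$. Checking that this reduced subsystem coincides with the known type-$C$ tableau criterion for $u \leq v$ in $W$ --- which is the route taken in the cited reference --- completes the argument.
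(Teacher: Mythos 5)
The paper itself does not prove this proposition: it is imported verbatim from \cite[\S 3.3]{Billey-Lakshmibai-00} (as is its type $D$ analogue), so there is no internal argument to compare yours against and your proposal has to stand on its own. Your forward direction is fine in outline: the generators do map as you say, the inversion pairing does give $\ell_{S_{2n}}(w')=2\,\ell_{W}(w)-N(w)$ where $N(w)$ is the number of sign changes of $w$ (you also need the standard fact that $N(w)$ equals the number of occurrences of $s_n$ in any reduced word for $w$, so that the letterwise image of a reduced word has the right number of letters), and then the subword property yields $u\le v\Rightarrow u'\le v'$.

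The reverse direction, however, has a genuine gap. The symmetry $w'(2n+1-a)=2n+1-w'(a)$ gives $r_{w'}(2n-i,2n-j)=r_{w'}(i,j)+(2n-i-j)$, so the inequality at $(i,j)$ is equivalent to the one at $(2n-i,2n-j)$; this cuts the system of rank inequalities in half (say, to all $(i,j)$ with $i\le n$ and $1\le j\le 2n$), but it does \emph{not} reduce it to the quadrant $1\le i,j\le n$, and that quadrant claim is false. Counterexample with $n=2$: take $u=\overline{1}\,\overline{2}$ and $v=\overline{2}\,\overline{1}$, so $u'=4321$ and $v'=3412$. Every rank number $r_{u'}(i,j)$ and $r_{v'}(i,j)$ with $i,j\le 2$ is zero, so the quadrant inequalities hold in both directions, yet $u'\not\le v'$ (already $r_{u'}(1,3)=0<1=r_{v'}(1,3)$), and of course $u\not\le v$ since $\ell(u)=4>3=\ell(v)$. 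Moreover, even after correcting to the half-system, your final step --- ``checking that this reduced subsystem coincides with the known type-$C$ tableau criterion'' --- is exactly the nontrivial content of the proposition being proved, and you defer it to the cited reference, so the reverse implication is not actually established. A self-contained repair would be either to prove the type $B/C$ criterion directly (domination of the sorted length-$i$ windows for all $i\le n$), or to use that under your identification $W=(W')^{\theta}$ with $\theta$ the diagram automorphism $t_i\mapsto t_{2n-i}$ of $S_{2n}$, and run an induction on length via the lifting property (applied twice for the commuting pair $t_i,t_{2n-i}$), using your length formula; both require real work beyond the symmetry observation.
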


\begin{corollary}\label{cor:type_c_schubert_vars}
Suppose $X_{w'}$ (resp. $X^{w'}$) is the Schubert subvariety (resp. the opposite Schubert subvariety) of $X'$ corresponding to $w'$, the image of $w \in W$ in $W'=S_{2n}$.  Then $X_{w'} \cap X = X_w$ (resp. $X^{w'} \cap X = X^w$), the Schubert subvariety (resp. the opposite Schubert subvariety) of $X$ corresponding to $w$.
\end{corollary}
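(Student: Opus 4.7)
The strategy is to reduce the claim to an identity of Schubert cells, then pass to closures using the compatibility of Bruhat orders supplied by the preceding proposition. The two key preliminary facts are that $B = G \cap B'$ and $B^- = G \cap (B')^-$, and that for each $w \in W$ one may choose a signed permutation matrix $\dot{w} \in \Sp(2n,\C)$ whose image in $\GL(2n,\C)$ simultaneously represents $w' \in W' = S_{2n}$; in other words, the embeddings $W \hookrightarrow W'$ and $N_G(T) \hookrightarrow N_{G'}(T')$ are compatible with the passage from $w$ to $w'$.

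The first step is to establish the cell-level identity $G \cap B' w' B' = B w B$ for $w \in W$ with image $w'$. The inclusion $\supseteq$ is immediate from the compatibilities above. For the reverse inclusion, an element $g$ of the left-hand side lies in a unique cell $B v B$ of the Bruhat decomposition of $G$; then $g \in B' v' B'$ in $G'$, and the disjointness of the $G'$-Bruhat decomposition forces $v' = w'$, hence $v = w$. Quotienting by the Borel on the right gives the set-theoretic equality
\[ (B' w' B'/B') \cap X \;=\; BwB/B, \]
where on the left we view $X$ as a subset of $X'$ via the embedding $gB \mapsto gB'$. An identical argument, with $B^-$ and $(B')^-$ replacing $B$ and $B'$ throughout, handles the opposite Schubert cells.

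To pass from cells to closures, write the Bruhat cell decomposition $X_{w'} = \bigsqcup_{v' \leq w'} B' v' B' / B'$ and intersect with $X$. Each cell $B'v'B'/B'$ meets $X$ nontrivially exactly when $v'$ is the image of some $v \in W$, in which case the intersection is the cell $BvB/B \subseteq X$. Invoking the preceding proposition, which asserts that $v \leq w$ in $W$ if and only if $v' \leq w'$ in $W'$, we conclude
\[ X_{w'} \cap X \;=\; \bigsqcup_{v \in W,\, v \leq w} BvB/B \;=\; X_w \]
as sets. Since $X_w$ is a reduced, irreducible closed subvariety of $X$ whose underlying set coincides with the closed subset $X_{w'} \cap X$, the equality holds as subvarieties of $X$. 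The argument for the opposite Schubert variety is completely analogous, using the opposite Bruhat decompositions of $G$ and $G'$.

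The core of the proof is thus a routine matching of Bruhat cells under the embedding $G \hookrightarrow G'$, combined with the order-theoretic input from the preceding proposition. The main obstacle is almost entirely bookkeeping: ensuring the compatibility of the Borel subgroups and of Weyl group representatives between $G$ and $G'$, so that the Bruhat decompositions line up correctly. A minor caveat is that the scheme-theoretic intersection of reduced subschemes need not be reduced, but since the underlying set agrees with the reduced, irreducible variety $X_w$, this presents no difficulty for the asserted equality of varieties.
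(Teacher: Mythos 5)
Your proof is correct and follows essentially the same route as the paper: identify the Schubert cells of $X$ with the intersections of the signed cells of $X'$ with $X$, then pass to closures using the cell decomposition of $X_{w'}$ together with the proposition that the Bruhat order on $W$ is induced from $W'$. The only (harmless) difference is at the cell level, where you argue via $B=G\cap B'$ and disjointness of the $G'$-Bruhat decomposition, while the paper uses $B$-stability of the intersections plus a counting argument; both yield the same identity $C_w = D_{w'}\cap X$ and the same emptiness statement for non-signed cells.
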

\begin{proof}
For the sake of clarity, we denote Schubert cells of $X$ by $\{C_w\}_{w \in W}$, and Schubert cells of $X'$ by $\{D_w\}_{w \in W'}$.

One first checks that the $C_w$ are all of the form $D_{w'} \cap X$.  Indeed, each of these intersections is easily seen to be non-empty, and is clearly stable under the Borel $B = B' \cap G$, with $B' \subseteq GL(2n,\C)$ the Borel of upper-triangular elements of the larger group.  An easy counting argument then shows that each such intersection must be a single $B$-orbit, hence equal to $C_w$.

Then using the previous proposition, we see that
\[ X_{w'} \cap X = (\bigcup_{v \leq w'} D_v) \cap X = \]
\[ \bigcup_{v \leq w'} (D_v \cap X) = \]
\[ \bigcup_{v' \leq w', v \in W} (D_{v'} \cap X) = \]
\[ \bigcup_{v \leq w, v \in W} C_v = X_w, \]
and similarly for opposite Schubert varieties.
\end{proof}

\begin{corollary}\label{cor:type-c-richardson}
Let $\gamma$ be a skew-symmetric $(n,n)$-clan avoiding the pattern $(1,2,1,2)$.  Let $Y_{\gamma}'$ denote the closure of the $K'$-orbit $Q_{\gamma}'$ on $X'$ associated to $\gamma$, and let $Y_{\gamma}$ denote the closure of the $K$-orbit $Q_{\gamma} = Q_{\gamma}' \cap X$ on $X$ associated to $\gamma$.  Let $u'=u(\gamma)$, $v'=v(\gamma)$ be the elements of $S_{2n}$ produced from $\gamma$ as described in the statement of Theorem \ref{thm:type-a-richardson}.  Then $Y_{\gamma}$ is the Richardson variety $X_u^v$.
\end{corollary}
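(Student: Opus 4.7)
The proof is essentially a routine combination of the tools already in place. The plan is to unwind the intersection $Y_\gamma = Y_\gamma' \cap X$ one factor at a time, using that $Y_\gamma'$ is already known to be a Richardson variety in $X'$, and that Schubert and opposite Schubert varieties in $X'$ indexed by signed elements restrict correctly to $X$.

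First, I would apply Theorem \ref{thm:type-a-richardson} to the skew-symmetric $(1,2,1,2)$-avoiding clan $\gamma$ to write
\[ Y_\gamma' = X_{u(\gamma)} \cap X^{v(\gamma)} \subseteq X', \]
where $X_{u(\gamma)}$ and $X^{v(\gamma)}$ denote the Schubert and opposite Schubert varieties in $X'=G'/B'$. By Proposition \ref{prop:skew_symmetric_clans}, both $u(\gamma)$ and $v(\gamma)$ are signed elements of $S_{2n}$, so I may write $u(\gamma)=u'$ and $v(\gamma)=v'$ for unique elements $u,v \in W$.

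Next, I would invoke Proposition \ref{prop:skew-symmetric-intersections}, which guarantees that $Q_\gamma = Q_\gamma' \cap X$ is a single $K$-orbit on $X$, so in particular $Y_\gamma = \overline{Q_\gamma} = \overline{Q_\gamma' \cap X}$. Since orbit intersections of this type behave well under closure (the closure of the intersection equals the intersection of the closure with $X$, as $X$ is closed in $X'$), we obtain
\[ Y_\gamma = Y_\gamma' \cap X = \bigl(X_{u'} \cap X^{v'}\bigr) \cap X = \bigl(X_{u'} \cap X\bigr) \cap \bigl(X^{v'} \cap X\bigr). \]

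Finally, I would apply Corollary \ref{cor:type_c_schubert_vars}, which identifies $X_{u'} \cap X = X_u$ and $X^{v'} \cap X = X^v$ as the Schubert and opposite Schubert varieties in $X$ corresponding to $u,v \in W$. Substituting yields
\[ Y_\gamma = X_u \cap X^v = X_u^v, \]
which is the Richardson variety in $X$. Non-emptiness of this intersection forces $u \geq v$ in the Bruhat order on $W$ automatically, so there is nothing further to check. The only place where real content enters is the identification $\overline{Q_\gamma' \cap X} = \overline{Q_\gamma'} \cap X$, which is immediate from Proposition \ref{prop:skew-symmetric-intersections} together with the fact that $X \subseteq X'$ is closed; no serious obstacle is anticipated.
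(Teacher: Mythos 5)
Your proposal is correct and follows essentially the same route as the paper: identify $Y_\gamma'$ as $X_{u'}^{v'}$ via Theorem \ref{thm:type-a-richardson}, use Proposition \ref{prop:skew_symmetric_clans} to see $u(\gamma),v(\gamma)$ are signed elements, use Proposition \ref{prop:skew-symmetric-intersections} to get $Y_\gamma = Y_\gamma' \cap X$, and finish with Corollary \ref{cor:type_c_schubert_vars}. The paper's proof is the same chain of equalities, asserting $\overline{Q_\gamma' \cap X} = Y_\gamma' \cap X$ with no more justification than you give.
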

\begin{proof}
By Proposition \ref{prop:skew_symmetric_clans}, $u(\gamma),v(\gamma)$ are signed elements of $S_{2n}$, which justifies denoting them by $u',v'$.  We know that $Y_{\gamma}'$ is the Richardson variety $X_{u'}^{v'} = X_{u'} \cap X^{v'}$.  Since $Q_{\gamma} = Q_{\gamma}' \cap X$ by Proposition \ref{prop:skew-symmetric-intersections}, we have $Y_{\gamma} = Y_{\gamma}' \cap X$.  Then by Corollary \ref{cor:type_c_schubert_vars},
\[ Y_{\gamma} = Y_{\gamma}' \cap X = X_{u'} \cap X^{v'} \cap X = (X_{u'} \cap X) \cap (X^{v'} \cap X) = X_u \cap X^v =X_u^v. \]
\end{proof}

Recall that for an $(n,n)$-clan $\gamma$, the permutation $u(\gamma)$ is a shuffle of $n,n-1,\hdots,1$ and $2n,2n-1,\hdots,n+1$, while $v(\gamma)$ is a shuffle of $1,\hdots,n$ and $n+1,\hdots,2n$.  When $\gamma$ is skew-symmetric, then $u(\gamma)$ and $v(\gamma)$ are of the form $u',v'$ for $u,v \in W$ signed permutations.  Note that as signed permutations, $u$ is a shuffle of $n,n-1,\hdots,k+1$ and $-1,\hdots,-k$ for some $k$, while $v$ is a shuffle of $1,\hdots,j$ and $-n,\hdots,-(j+1)$ for some $j$.

Since $Y_{\gamma}$ is the Richardson variety $X_u^v$, using Theorem \ref{thm:brion}, we can compute the Schubert product $S_{w_0u} \cdot S_v$.  Given the above description of $u$, and given that $w_0$ is the signed permutation which flips all signs, $w_0u$ is a shuffle of $1,\hdots,k$ and $-n,\hdots,-(k+1)$.  Thus we make the following definition:

\begin{definition}
Suppose that $u,v \in W$ are signed permutations with the following properties:
\begin{enumerate}
	\item $u$ is a shuffle of $1,\hdots,k$ and $-n,\hdots,-(k+1)$ for some $k$.
	\item $v$ is a shuffle of $1,\hdots,j$ and $-n,\hdots,-(j+1)$ for some $j$.
\end{enumerate}
We call $(u,v)$ a \textbf{type $C$ pair of signed shuffles}.
\end{definition}

Since Theorem \ref{thm:brion} applies to the class of any spherical subgroup orbit closure in any flag variety, we now know that we can use it to compute the Schubert product $S_u \cdot S_v$ whenever $(u,v)$ is a type $C$ pair of signed shuffles such that $u = w_0 \cdot u(\gamma)$ and $v = v(\gamma)$ for some skew-symmetric $(n,n)$-clan $\gamma$.  Note that by Proposition \ref{prop:type-a-richardson-converse}, we can compute \textit{all} such non-trivial products this way:  Assuming $(u,v)$ is a type $C$ pair of signed shuffles with $w_0u \geq v$, we have
\[ S_u \cdot S_v = [X_{w_0u}] \cdot [X^v] = [X_{w_0u}^v] = [X_{(w_0u)'}^{v'} \cap X] = [X_{w_0'u'}^{v'} \cap X] = [Y_{\gamma(u',v')}' \cap X] = [Y_{\gamma(u',v')}]. \]

We now apply Theorem \ref{thm:brion} to determine a positive rule for type $C$ structure constants $c_{u,v}^w$ when $(u,v)$ is a type $C$ pair of signed shuffles.  To determine precisely what the rule says, we must understand the weak order on $K \backslash X$, and specifically the monoidal action of $M(W)$ on $K \backslash X$.  For this, we refer to \cite{Yamamoto-97,Matsuki-Oshima-90}.  Let $\frt$ be the Cartan subalgebra of $\frg = \text{Lie}(G)$ consisting of diagonal matrices 
\[ \text{diag}(a_1,\hdots,a_n,-a_n,\hdots,-a_1). \]
Let $x_1,\hdots,x_n$ be coordinates on $\frt$, with 
\[ x_i(\text{diag}(a_1,\hdots,a_n,-a_n,\hdots,-a_1)) = a_i. \]
Order the simple roots in the following way:  $\ga_i = x_i - x_{i+1}$ for $i=1,\hdots,n-1$, and $\ga_n = 2x_n$.  For $i=1,\hdots,n$, let $s_i$ denote $s_{\ga_i}$.  We wish to define the monoidal action of $s_i$ on any $K$-orbit $Q$.  Identifying $K$-orbits with skew-symmetric $(n,n)$-clans, we speak instead of the action of $s_i$ on such a clan $\gamma=(c_1,\hdots,c_{2n})$.  We identify the simple roots as complex or non-compact imaginary (type I or II) for $\gamma$, rather than for the orbit $Q_{\gamma}$.

First consider $\ga = \ga_i$ with $i=1,\hdots,n-1$.  Then $\ga$ is complex for $\gamma$ (and $s_i \cdot \gamma \neq \gamma)$ if and only if one of the following occurs:
\begin{enumerate}
	\item $c_i$ is a sign, $c_{i+1}$ is a number, and the mate for $c_{i+1}$ occurs to the right of $c_{i+1}$.
	\item $c_i$ is a number, $c_{i+1}$ is a sign, and the mate for $c_i$ occurs to the left of $c_i$.
	\item $c_i$ and $c_{i+1}$ are unequal natural numbers, the mate for $c_i$ occurs to the left of the mate for $c_{i+1}$, and $(c_i,c_{i+1}) \neq (c_{2n-i},c_{2n-i+1})$.
\end{enumerate}

In this event, $s_i \cdot \gamma = \gamma'$, where $\gamma'$ is obtained from $\gamma$ by interchanging the characters in positions $i,i+1$, \textit{and} the characters in positions $2n-i,2n-i+1$.

On the other hand, $\ga$ is non-compact imaginary for $\gamma$ if and only if one of the following two possibilities occurs:
\begin{enumerate}
	\item $c_i$ and $c_{i+1}$ are opposite signs.
	\item $c_i$ and $c_{i+1}$ are unequal natural numbers, with $(c_i,c_{i+1}) = (c_{2n-i},c_{2n-i+1})$ (in order).
\end{enumerate}

In the first case above, $s_i \cdot \gamma = \gamma''$, where $\gamma''$ is obtained from $\gamma$ by replacing the opposite signs in positions $i,i+1$ and (by skew-symmetry) positions $2n-i,2n-i+1$ each by a distinct pair of matching natural numbers.  In the second case, $s_i \cdot \gamma = \gamma'''$, where $\gamma'''$ is obtained from $\gamma$ by interchanging the characters in positions $i,i+1$ (but \textit{not} those in positions $2n-i,2n-i+1$).

The cross-action of $s_i$ on the clan $\gamma$ is by the action of the corresponding permutation $s_i' = (i,i+1)(2n-i,2n-i+1) \in S_{2n}$.  That is, $s_i \times \gamma$ is the clan obtained from $\gamma$ by interchanging the characters in positions $i,i+1$ and the characters in positions $2n-i,2n-i+1$.  Note that this action does not fix $\gamma$ in the first case above, since two pairs of opposite signs get interchanged.  This means that in the first case above, $\ga$ is a non-compact imaginary root of type I.  However, this cross-action \textit{does} fix $\gamma$ in the second case, since interchanging the two pairs of numbers does not change the clan.  (Recall the equivalence of character strings described immediately after Definition \ref{def:clans}.)  Thus in the second case, $\ga$ is a non-compact imaginary root of type II.

Now consider $s_n$.  We have that $\ga_n$ is complex for $\gamma$ (and $s_n \cdot \gamma \neq \gamma$) if and only if $c_n$ and $c_{n+1}$ are unequal natural numbers, with the mate for $c_n$ to the left of the mate for $c_{n+1}$.  $\ga_n$ is non-compact imaginary for $\gamma$ if and only if $c_n$ and $c_{n+1}$ are opposite signs.  In this case, the cross action of $s_n$ on $\gamma$ is by the permutation action of $s_n' = (n,n+1) \in S_{2n}$.  Thus $s_n \times \gamma \neq \gamma$, since the cross action of $s_n$ interchanges the opposite signs in positions $n,n+1$.  Thus we see that $\ga_n$ is a non-compact imaginary root of type I.

We summarize the preceding discussion by recasting the monoidal action of $W$ in purely combinatorial terms, as a sequence of ``operations" on skew-symmetric $(n,n)$-clans.  Given a simple reflection $s_i$, consider the following possible operations on $\gamma$:
\begin{enumerate}[(a)]
	\item Interchange the characters in positions $i,i+1$ \textit{and} the characters in positions $2n-i,2n-i+1$.
	\item Interchange the characters in positions $i,i+1$, \textit{but not} the characters in positions $2n-i,2n-i+1$.
	\item Replace the characters in positions $i,i+1$ \textit{and} the characters in positions $2n-i,2n-i+1$ each by a pair of matching natural numbers.
	\item Interchange the characters in positions $n,n+1$.
	\item Replace the characters in positions $n,n+1$ by a pair of matching natural numbers.
\end{enumerate}

Then the monoidal action of $M(W)$ on $\gamma$ is as follows:  For $i=1,\hdots,n-1$,
\begin{enumerate}
	\item If $c_i$ is a sign, $c_{i+1}$ is a number, and the mate for $c_{i+1}$ occurs to the right of $c_{i+1}$, $s_i \cdot \gamma$ is obtained from $\gamma$ by operation (a).
	\item If $c_i$ is a number, $c_{i+1}$ is a sign, and the mate for $c_i$ occurs to the left of $c_i$, $s_i \cdot \gamma$ is obtained from $\gamma$ by operation (a).
	\item If $c_i$ and $c_{i+1}$ are unequal natural numbers, with the mate for $c_i$ occurring to the left of the mate for $c_{i+1}$, \textit{and} if $(c_i,c_{i+1}) \neq (c_{2n-i},c_{2n-i+1})$, then $s_i \cdot \gamma$ is obtained from $\gamma$ by operation (a).
	\item If $c_i$ and $c_{i+1}$ are unequal natural numbers, and if $(c_i,c_{i+1}) = (c_{2n-i},c_{2n-i+1})$, then $s_i \cdot \gamma$ is obtained from $\gamma$ by operation (b). \textbf{(*)}
	\item If $c_i$ and $c_{i+1}$ are opposite signs, then $s_i \cdot \gamma$ is obtained from $\gamma$ by operation (c).
	\item If none of the above hold, them $s_i \cdot \gamma = \gamma$.
\end{enumerate}

We give examples of (1)-(5) above:
\begin{enumerate}
	\item $s_2 \cdot (+,+,1,1,2,2,-,-) = (+,1,+,1,2,-,2,-)$
	\item $s_2 \cdot (1,1,+,+,-,-,2,2) = (1,+,1,+,-,2,-,2)$
	\item $s_1 \cdot (1,2,1,2,3,4,3,4) = (2,1,1,2,3,4,4,3)$
	\item $s_1 \cdot (1,2,3,4,3,4,1,2) = (2,1,3,4,3,4,1,2)$
	\item $s_1 \cdot (+,-,1,1,2,2,+,-) = (3,3,1,1,2,2,4,4)$
\end{enumerate}

The action of $s_n$ is described as follows:
\begin{enumerate}
	\item If $c_n$ and $c_{n+1}$ are unequal natural numbers, with the mate of $c_n$ occurring to the left of the mate for $c_{n+1}$, then $s_n \cdot \gamma$ is obtained from $\gamma$ by operation (d).
	\item If $c_n$ and $c_{n+1}$ are opposite signs, then $s_n \cdot \gamma$ is obtained from $\gamma$ by operation (e).
	\item If neither of the above hold, then $s_n \cdot \gamma = \gamma$.
\end{enumerate}

Examples of (1), (2) above include
\begin{enumerate}
	\item $s_4 \cdot (+,1,+,1,2,-,2,-) = (+,1,+,2,1,-,2,-)$
	\item $s_4 \cdot (1,+,1,+,-,2,-,2) = (1,+,1,3,3,2,-,2)$
\end{enumerate}

Note that in the definition of $s_i \cdot \gamma$ for $i=1,\hdots,n-1$, rule (4) above is marked with an asterisk.  This rule warrants special attention, since it is the sole case in which $\ga_i$ is a non-compact imaginary root of type II.  Thus if $\gamma'$ is obtained from $\gamma$ by application of rule (4), there is a double edge connecting $\gamma$ to $\gamma'$ in the weak order graph.

\begin{definition}
For any $w \in W$, and for any skew-symmetric $(n,n)$-clan $\gamma$, define $D(w,\gamma)$ to be the number of times rule (4) must be applied in the computation of $w \cdot \gamma$.  If $w \cdot Y_{\gamma} = G/B$ and $l(w) = \text{codim}(Y_{\gamma})$, then this corresponds to the number of double edges in a path from $Y_{\gamma}$ to $G/B$ in the weak order graph, the product of whose edge labels is $w$.  (Again, this is well-defined by \cite[Lemma 5]{Brion-01}.)
\end{definition}

As with \cite[Theorem 7.5]{Wyser-11a}, when we combine Theorem \ref{thm:brion} with Corollary \ref{cor:type-c-richardson} and the case-specific combinatorics described above, we get a special case rule for Schubert constants in type $C$.  The statement of the rule is as follows:
\begin{theorem}\label{thm:type_c_structure_constants}
Suppose $(u,v)$ is a type $C$ pair of signed shuffles, with $w_0u \geq v$.  Let $u',v'$ be the images of $u,v$ in $S_{2n}$, and let $\gamma = \gamma(u',v')$ be the corresponding skew-symmetric $(n,n)$-clan avoiding the pattern $(1,2,1,2)$ (cf. Proposition \ref{prop:type-a-richardson-converse}).  Let 
\[ \gamma_0=(1,2,\hdots,n-1,n,n,n-1,\hdots,2,1) \]
be the skew-symmetric $(n,n)$-clan corresponding to the open, dense $K$-orbit on $X$.

For any $w \in W$,
\[ c_{u,v}^w = 
\begin{cases}
	2^{D(w,\gamma)} & \text{ if $l(w) = l(u) + l(v)$ and $w \cdot \gamma = \gamma_0$} \\
	0 & \text{ otherwise.}
\end{cases}
\]
\end{theorem}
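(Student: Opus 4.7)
The plan is to realize $c_{u,v}^w$ as the coefficient of $S_w$ in the Schubert expansion of a fundamental class of an orbit closure, and then invoke Theorem \ref{thm:brion} together with the combinatorial description of the weak order already set up in this section. First I would rewrite $S_u \cdot S_v$ as the class of a Richardson variety using $[X_t] = [X^{w_0 t}]$: namely $S_u \cdot S_v = [X^u]\cdot[X^v] = [X_{w_0 u}]\cdot[X^v] = [X_{w_0 u}^{v}]$, which is nonzero precisely when $w_0 u \geq v$ in the Bruhat order on $W$. The hypothesis that $(u,v)$ is a type $C$ pair of signed shuffles makes $(w_0 u, v)$ into a pair of shuffles of the type needed for Proposition \ref{prop:type-a-richardson-converse}, so that, after passing to the images $(w_0 u)'$ and $v'$ in $S_{2n}$, the Richardson $X_{(w_0 u)'}^{v'} \subseteq X'$ is the closure $Y_\gamma'$ of the $K'$-orbit indexed by the skew-symmetric $(n,n)$-clan $\gamma = \gamma(u',v')$ avoiding $(1,2,1,2)$. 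Intersecting with $X$, Corollary \ref{cor:type-c-richardson} (applied in reverse, using that the skew-symmetry of $\gamma$ already noted in Proposition \ref{prop:skew_symmetric_clans} accounts for why $(w_0u)'$ and $v'$ are signed) yields $S_u \cdot S_v = [Y_\gamma]$.

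Next I would apply Theorem \ref{thm:brion} to the symmetric subgroup orbit closure $Y_\gamma$, obtaining
\[
[Y_\gamma] \;=\; \sum_{w \in W(Y_\gamma)} 2^{D(w)}\, S_w,
\]
where $W(Y_\gamma) = \{w \in W \mid w \cdot Y_\gamma = X,\ l(w) = \text{codim}(Y_\gamma)\}$. Comparing coefficients with $S_u \cdot S_v = \sum_w c_{u,v}^w S_w$ then gives $c_{u,v}^w = 2^{D(w)}$ for $w \in W(Y_\gamma)$ and $c_{u,v}^w = 0$ otherwise.

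It remains to match the conditions in the theorem statement with the ones appearing in $W(Y_\gamma)$. For the length condition, I would compute $\text{codim}(Y_\gamma) = \dim X - \dim X_{w_0u}^v = l(w_0) - (l(w_0u) - l(v)) = l(u) + l(v)$, which produces the length constraint $l(w) = l(u) + l(v)$. For the monoid condition, the top $K$-orbit on $X$ corresponds (under the parametrization of Proposition \ref{prop:skew-symmetric-intersections}) to the open dense clan, and a direct check shows this is $\gamma_0 = (1,2,\dots,n,n,\dots,2,1)$; thus $w \cdot Y_\gamma = X$ translates to $w \cdot \gamma = \gamma_0$ under the combinatorial action on skew-symmetric clans that was spelled out just before the theorem. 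Finally, the agreement of $D(w)$ with $D(w,\gamma)$ follows from the identification of double edges in the weak order graph with applications of rule (4): this is the only case in which $\alpha_i$ is non-compact imaginary of type II for $Y_\gamma$, as was verified via the cross-action analysis showing that rule (4) is precisely the setting where $s_i \times \gamma = \gamma$ while $s_i \cdot \gamma \neq \gamma$.

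The bulk of the proof is thus assembly, with the one substantive check being that the combinatorial monoidal action recorded in rules (1)--(5) for $s_i$, $i<n$, and (1)--(2) for $s_n$, together with the type I/II classification, faithfully reflects the geometric weak order. I would expect this check to be the main (though not deep) obstacle, and would handle it case by case following the standard non-compact imaginary/complex root dichotomy from \cite{Richardson-Springer-90}, cross-referencing \cite{Yamamoto-97,Matsuki-Oshima-90} for the explicit description in the symplectic case. Once this matches, the theorem is immediate.
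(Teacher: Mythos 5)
Your proposal is correct and follows essentially the same route as the paper: identify $S_u \cdot S_v$ with the class of the Richardson variety $X_{w_0u}^v$, recognize it as the $K$-orbit closure $[Y_{\gamma(u',v')}]$ via Proposition \ref{prop:type-a-richardson-converse} and Corollary \ref{cor:type-c-richardson}, apply Theorem \ref{thm:brion}, and translate codimension, the dense orbit, and the type II double edges (rule (4)) into the stated combinatorial conditions. The paper's "proof" is exactly this assembly, deferring the same geometric-to-combinatorial matching of the monoidal action to \cite{Yamamoto-97,Matsuki-Oshima-90}.
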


\begin{example}\label{ex:example-2}
Consider the product $S_u \cdot S_v$ for $u = \overline{4}123$, $v=1\overline{4}23$.  The images of $u$ and $v$ in $S_8$ are $u' = 51236784$ and $v' = 15236748$, respectively.  One checks that this $(4,4)$-pair corresponds to the skew-symmetric $(4,4)$-clan $\gamma(u',v') = (+,-,1,2,2,1,+,-)$.  As elements of $W$, we have that $l(u) = 4$, $l(v) = 3$, and there are $44$ elements of $W$ of length $l(u) + l(v) = 7$.  Table 1 of the Appendix shows each of these elements as words in the simple reflections, the clan obtained from computing the action of each on the clan $\gamma(u,v)$, and the corresponding structure constant $c_{u,v}^w$ specified by Theorem \ref{thm:type_c_structure_constants}.

The data in Table 1, obtained using Theorem \ref{thm:type_c_structure_constants}, was seen to agree with the output of Maple code written by Alexander Yong, which computes Schubert products in all classical Lie types (\cite{Yong-Maple}).

Note that unlike the rule of \cite[Theorem 7.5]{Wyser-11a} (and the rule described in the next section for type $D$), the type $C$ rule is not multiplicity-free, as this example demonstrates.  Consider the monoidal action of $w=[3,2,1,4,3,2,1]$ on the clan $\gamma(u,v)$:
\[ (+,-,1,2,2,1,+,-) \stackrel{1}{\longrightarrow} (3,3,1,2,2,1,4,4) \stackrel{2}{\longrightarrow} (3,1,3,2,2,4,1,4) \stackrel{3}{\longrightarrow} \]
\[ (3,1,2,3,4,2,1,4) \stackrel{4}{\longrightarrow} (3,1,2,4,3,2,1,4) \stackrel{1}{\longrightarrow} (1,3,2,4,3,2,4,1) \stackrel{2}{\longrightarrow} \]
\[  (1,2,3,4,3,4,2,1) \stackrel{3^*}{\longrightarrow} (1,2,3,4,4,3,2,1). \]
Note that the last step, marked with an asterisk, is an application of rule (4) above.  Since this is the lone application of rule (4) required in the computation of the action of $w$, the corresponding structure constant is $2$.

On the other hand, the action of $[4,3,2,1,4,3,2]$ requires no applications of rule (4), so the corresponding structure constant is $1$:
\[ (+,-,1,2,2,1,+,-) \stackrel{2}{\longrightarrow} (+,1,-,2,2,+,1,-) \stackrel{3}{\longrightarrow} (+,1,2,-,+,2,1,-) \stackrel{4}{\longrightarrow} \]
\[ (+,1,2,3,3,2,1,-) \stackrel{1}{\longrightarrow} (1,+,2,3,3,2,-,1) \stackrel{2}{\longrightarrow} (1,2,+,3,3,-,2,1) \stackrel{3}{\longrightarrow} \]
\[ (1,2,3,+,-,3,2,1) \stackrel{4}{\longrightarrow} (1,2,3,4,4,3,2,1). \]
\end{example}

\section{Type D}\label{sect:type-d}
In parallel with the previous section, here we realize the even special orthogonal group $G = SO(2n,\C)$ as the isometry group of the quadratic form
\[ \left\langle x,y \right\rangle = \displaystyle\sum_{i=1}^{2n} x_i y_{2n+1-i}. \]
Thus $G$ is the set of all matrices $g$ such that $g^t J_{2n} g = J_{2n}$, with $J_{2n}$ the $2n \times 2n$ antidiagonal matrix $(\delta_{i,2n+1-j})$.

Again, the flag variety $X$ for $G$ is naturally a closed subvariety of the type $A$ flag variety $X'$, here as one of the two components of the variety of isotropic flags with respect to the form $\langle \cdot, \cdot \rangle$.  (We choose $X$ to be the component containing the ``standard flag" $\left\langle e_1,\hdots,e_{2n} \right\rangle$.)  The embedding corresponds again to the map $G/B \hookrightarrow G'/B'$, where $G'=GL(2n,\C)$, $B'$ is the Borel subgroup of $G'$ consisting of upper triangular matrices, and $B = B' \cap G$ is the Borel subgroup of $G$ consisting of upper triangular elements of $G$.  And again, in exactly the same notation as the previous section, $G$ is stable under $\theta'$, with $G^{\theta} = G \cap K' \cong GL(n,\C)$.

As in the last section, it is the case that if $Y'$ is a $K'$-orbit closure on $X'$ which coincides with a Richardson subvariety of $X'$, and if $Y' \cap X \neq \emptyset$, then $Y' \cap X$ is a single $K$-orbit closure on $X$ which coincides with a Richardson subvariety of $X$.  Applying Theorem \ref{thm:brion} once more, we obtain a positive rule for structure constants in type $D$.  The specific constants $c_{u,v}^w$ for which we get a rule once again correspond to pairs $(u,v)$ of shuffles.  When $n$ is even, this is a pair of signed shuffles as defined in the previous section, but when $n$ is odd, the definition is slightly different.  (See Definition \ref{def:type-d-shuffles}.)

We start once again by parametrizing the $K$-orbits on $X$ by a subset of the $(n,n)$-clans parametrizing $K'$-orbits on $X'$.  We do not prove the following proposition, but refer the reader to \cite{McGovern-Trapa-09,Matsuki-Oshima-90}.

\begin{prop}[\cite{McGovern-Trapa-09,Matsuki-Oshima-90}]\label{prop:type-D-orbit-param}
Let $\gamma=(c_1,\hdots,c_{2n})$ be an $(n,n)$-clan, with $Q_{\gamma}'$ the corresponding $K'$-orbit on $X'$.  Then $Q_{\gamma}' \cap X \neq \emptyset$ if and only if $\gamma$ has the following three properties:
\begin{enumerate}
	\item $\gamma$ is skew-symmetric.
	\item If $c_i = c_j$ is a pair of equal natural numbers, then $j \neq 2n+1-i$.
	\item Among $(c_1,\hdots,c_n)$, the total number of $-$ signs and pairs of equal natural numbers is even.
\end{enumerate}
Moreover, in this event, $Q_{\gamma}' \cap X$ is once again a \textit{single} $K$-orbit on $X$.  Thus $K \backslash X$ is parametrized by the set of all $(n,n)$-clans having properties (1)-(3) above.
\end{prop}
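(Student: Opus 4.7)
The plan is to follow the strategy of Proposition \ref{prop:skew-symmetric-intersections} from the type $C$ case, adapting for two features new to type $D$: the form on $\C^{2n}$ is symmetric (rather than skew-symmetric), and the full variety of isotropic flags for this symmetric form has two connected components, only one of which is $X$. Necessity of (1) will come, as in type $C$, from the isotropy of the flags in $X$; necessity of (2) will come from the distinctive behavior of symmetric forms on the diagonal; and necessity of (3) will come from restricting to the correct component.

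For necessity, I would analyze a representative flag $F_\bullet \in Q_\gamma'$ and determine when it satisfies the orthogonality relations $F_{2n+1-i} = F_i^\perp$ and lies in the correct component of $X'$'s isotropic flag subvariety. A flag in $Q_\gamma'$ is characterized, up to $K'$-action, by combinatorial data recording how each $F_k$ meets $E_+ = \langle e_1, \ldots, e_n\rangle$ and $E_- = \langle e_{n+1}, \ldots, e_{2n}\rangle$. Since the symmetric form pairs $E_+$ and $E_-$ non-degenerately, the orthogonality constraint translates directly into the requirement that $\gamma$ be skew-symmetric, yielding (1). For (2), if $c_i = c_j$ with $j = 2n+1-i$, then the ``mixed'' vector produced by this matched pair of numerals is forced by the flag structure into a position where its self-pairing under the symmetric form is nonzero, violating isotropy; this has no analog in type $C$ because the symplectic form vanishes identically on the diagonal. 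For (3), I would recall that the isotropic-flag variety of a nondegenerate symmetric form on $\C^{2n}$ has two connected components, detected by a $\Z/2$-valued invariant, and verify by explicit computation on the canonical representative of $Q_\gamma'$ that this invariant equals the parity of the total number of $-$ signs plus pairs of equal natural numbers occurring in $(c_1, \ldots, c_n)$. Since $X$ is chosen to contain the standard flag (whose associated clan $(+, \ldots, +, -, \ldots, -)$ has this parity $0$), we must restrict to clans with even parity.

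For sufficiency, given $\gamma$ satisfying (1)--(3), I would explicitly construct a flag $F_\bullet \in Q_\gamma' \cap X$: signs dictate standard basis vectors drawn from $E_+$ or $E_-$; each matched pair of natural numbers contributes a mixed vector chosen to be isotropic, which is possible precisely because condition (2) rules out the obstructing configuration; skew-symmetry lets these choices be made consistently with $F_{2n+1-i} = F_i^\perp$ throughout; and condition (3) places the resulting flag in the correct component.

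For the second assertion, I would invoke the same inner-class counting argument sketched in the proof of Proposition \ref{prop:skew-symmetric-intersections}. In the type $D$ inner class, the relevant fixed-point subgroups $K_i = G^{\theta_i}$ are $GL(n,\C)$ together with the groups $SO(2p,\C) \times SO(2q,\C)$ as $p+q=n$, each realized as $G \cap K_i'$ for some $K_i' \cong GL(p,\C) \times GL(q,\C)$; the disjoint union $\coprod_i K_i \backslash X$ is in bijection with the Adams--DuCloux one-sided parameter space $\caX$, whose cardinality can be computed explicitly, and comparing cardinalities forces each nonempty $Q' \cap X$ to be a single $K_i$-orbit. The main obstacle I anticipate is a rigorous treatment of condition (3): unlike the purely local conditions (1) and (2), which follow mechanically from orthogonality, verifying that the combinatorial parity invariant really detects membership in the correct component demands a careful propagation argument, checking that the invariant is constant on each $K'$-orbit and is computed correctly on some convenient representative.
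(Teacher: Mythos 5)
The paper does not actually prove this proposition: it is stated with a citation to \cite{McGovern-Trapa-09,Matsuki-Oshima-90}, and even the analogous type $C$ statement (Proposition \ref{prop:skew-symmetric-intersections}) is handled by citing \cite{Yamamoto-97} for the intersection criterion and only sketching the single-orbit claim via the inner-class/one-sided-parameter-space counting argument, whose details are deferred to a paper in preparation. Your proposal therefore goes beyond the paper by reconstructing the content of the cited references: necessity of (1) from the fact that the symmetric form pairs $E_+$ with $E_-$ and the orthogonality condition $F_{2n+1-i}=F_i^{\perp}$, necessity of (2) from the failure of isotropy of a ``diagonal'' mixed vector $e_a+e_{2n+1-a}$ (correctly contrasted with the symplectic case, where such vectors are automatically isotropic --- this is exactly why clans like the type $C$ open-orbit clan are excluded in type $D$), necessity of (3) from the two-component structure of the orthogonal isotropic flag variety, and sufficiency by explicit construction of representative flags. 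This is essentially the Matsuki--Oshima/McGovern--Trapa route, and it is a sound outline; your check that the standard flag's clan $(+,\hdots,+,-,\hdots,-)$ has even parity, pinning down which component is $X$, is the right normalization.

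Two caveats. First, in the inner-class counting argument the relevant fixed-point subgroups are $GL(n,\C)$ together with $S(O(2p,\C)\times O(2q,\C))$, not $SO(2p,\C)\times SO(2q,\C)$ (the latter is only the identity component of $G^{\theta_i}$, which has two components); this bookkeeping matters when comparing cardinalities with the parameter space $\caX$ of \cite{Adams-DuCloux-09}. Second, the two genuinely hard points --- that the $\Z/2$ component invariant of an isotropic flag in $Q_{\gamma}'$ is determined by $\gamma$ alone (equivalently, that $Q_{\gamma}'$ cannot meet both components) and that each nonempty intersection is a single $K$-orbit --- remain plans rather than arguments in your write-up; note that the first is not independent of the second (if $Q_{\gamma}'\cap\wt{X}$ broke into $K$-orbits on both components, your ``constancy on the orbit'' check would fail), so the careful propagation you anticipate is really part of the same counting/matching argument. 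Since the paper itself leaves these at the level of citations, your sketch is at a comparable level of rigor, but it should not be read as a complete self-contained proof.
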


We note further (\cite{McGovern-Trapa-09,Matsuki-Oshima-90}) that when we parametrize $K \backslash X$ as described in the previous proposition, the open dense $K$-orbit corresponds to the clan
\[ \gamma_0 = (1,2,\hdots,n-3,n-2,n-1,n,n-1,n,n-3,n-2,\hdots,1,2) \]
if $n$ is even, and to the clan
\[ \gamma_0 = (1,2,\hdots,n-3,n-2,n-1,n,+,-,n-1,n,n-3,n-2,\hdots,1,2) \]
if $n$ is odd.

\begin{definition}
For the sake of brevity, we refer to an $(n,n)$-clan having properties (1)-(3) of the previous proposition as a \textbf{type D clan}.
\end{definition}

Now suppose that $Y_{\gamma}' = \overline{Q_{\gamma}'}$ is a $K'$-orbit closure on $X'$, with $\gamma$ a type $D$ clan avoiding the pattern $(1,2,1,2)$.  We know that $Y_{\gamma}'$ is a Richardson variety in $X'$.  By the previous proposition, we also know that $Y_{\gamma} := \overline{Q_{\gamma}} = \overline{Q_{\gamma}' \cap X} = Y_{\gamma}' \cap X$.  We wish to see that $Y_{\gamma}$ is a Richardson variety in $X$.

Let $W$ be the Weyl group for $G$, $W' = S_{2n}$ the Weyl group for $G'$.  In this case, $W$ consists of signed permutations of $[n]$, changing an \textit{even} number of signs.  These embed into $W'$ in precisely the same way as described in the previous section.  The only difference is that now the image of this embedding is smaller.  Indeed, elements of $W'$ which are images of $W$ under the embedding are signed elements $w'$ of $S_{2n}$ having the property that $\#\{i \leq n \mid w'(i) > n\}$ is even.  We will refer to such elements as ``type $D$ elements of $S_{2n}$".  Once again, we use notation such as $w',u'$, etc., to indicate the images of elements $w$, $u$, etc. of $W$ in $W'$. 

\begin{prop}
Suppose that $\gamma=(c_1,\hdots,c_{2n})$ is a type $D$ clan avoiding the pattern $(1,2,1,2)$.  Then $u(\gamma),v(\gamma) \in W'$ are type $D$ elements of $S_{2n}$.
\end{prop}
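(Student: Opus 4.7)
My plan is to leverage Proposition \ref{prop:skew_symmetric_clans}: since a type $D$ clan is in particular skew-symmetric (condition (1) of Proposition \ref{prop:type-D-orbit-param}), that proposition already guarantees $u(\gamma), v(\gamma)$ are signed elements of $S_{2n}$.  What still needs to be checked is the parity condition defining a type $D$ element of $S_{2n}$, namely that $\#\{i\le n \mid w'(i)>n\}$ is even, for $w' = u(\gamma)$ and for $w'=v(\gamma)$.

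To set this up, I partition the matching pairs of natural numbers in $\gamma=(c_1,\ldots,c_{2n})$ according to how they sit relative to the halves $(c_1,\ldots,c_n)$ and $(c_{n+1},\ldots,c_{2n})$: call a pair \emph{type (i)} if both positions lie in $\{1,\ldots,n\}$, \emph{type (ii)} if exactly one does, and \emph{type (iii)} if neither does.  Let $P_1,P_2,P_3$ denote the respective counts, and write $A$ for the number of $-$ signs appearing in $(c_1,\ldots,c_n)$.  Condition (3) of Proposition \ref{prop:type-D-orbit-param} then reads $A+P_1\equiv 0\pmod 2$.

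The main structural step is the following orbit argument, which uses condition (2) crucially.  Skew-symmetry implies that the involution $\sigma\colon i\mapsto 2n+1-i$ carries a matching pair $\{i,j\}$ to another matching pair $\{2n+1-i,2n+1-j\}$.  A $\sigma$-fixed pair would satisfy $j=2n+1-i$, which is exactly what condition (2) forbids.  Hence all $\sigma$-orbits on matching pairs have size two.  Because $\sigma$ interchanges the halves, orbits either pair a type (i) pair with a type (iii) pair (so $P_1=P_3$) or pair two type (ii) pairs among themselves, forcing $P_2$ to be even.

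Finally I count, using the definitions of $u$ and $v$.  We have $u(i)>n$ exactly when $c_i$ is a $-$ sign or a first occurrence of a natural number.  The first occurrences among positions $\le n$ are contributed by type (i) pairs (both positions are in the first half, so the leftmost one is the first occurrence) and by type (ii) pairs (the single first-half position is automatically the first occurrence).  Thus
\[
\#\{i\le n\mid u(i)>n\}=A+P_1+P_2,
\]
which is even by condition (3) together with $P_2$ even.  For $v$, the condition $v(i)>n$ records $-$ signs and \emph{second} occurrences in the first half; only type (i) pairs supply a second occurrence inside the first half, so
\[
\#\{i\le n\mid v(i)>n\}=A+P_1,
\]
even by condition (3) directly.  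The only nontrivial part is the orbit analysis; without condition (2) a type (ii) pair could be $\sigma$-fixed and $P_2$ could be odd, so that is where I expect to need care.
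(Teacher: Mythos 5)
Your argument is correct, but it gets the parity by a genuinely different mechanism than the paper. The paper's proof also reduces to checking that the number of $-$ signs plus the number of first (resp.\ second) occurrences among $(c_1,\hdots,c_n)$ is even, but it then uses the hypothesis that $\gamma$ avoids $(1,2,1,2)$, together with skew-symmetry and condition (2) of Proposition \ref{prop:type-D-orbit-param}, to show that your ``type (ii)'' pairs simply do not exist: if $c_i=c_j$ with $i\le n<j$ and $j\ne 2n+1-i$, then skew-symmetry forces the four positions $i,j,2n+1-i,2n+1-j$ to exhibit the pattern $(1,2,1,2)$, and $j=2n+1-i$ is excluded by condition (2); hence every first-half natural number has its mate in the first half, so the count of first occurrences in the first half literally equals $P_1$ and condition (3) finishes both $u$ and $v$ at once. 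You instead never invoke pattern avoidance in the parity step: your fixed-point-free involution $\{i,j\}\mapsto\{2n+1-i,2n+1-j\}$ (well-defined by skew-symmetry, fixed-point-free by condition (2)) shows only that $P_2$ is even, which is all the parity argument needs, and your bookkeeping $\#\{i\le n\mid u(i)>n\}=A+P_1+P_2$ and $\#\{i\le n\mid v(i)>n\}=A+P_1$ is accurate. The trade-off: your route proves the proposition for \emph{every} type $D$ clan, not just those avoiding $(1,2,1,2)$, so it is slightly more general and arguably cleaner; the paper's route buys the stronger structural fact $P_2=0$ (mates of first-half numbers stay in the first half), which makes the final count immediate but is not otherwise needed. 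Both proofs are complete; yours correctly identifies condition (2) as the crucial hypothesis for ruling out a $\sigma$-fixed split pair.
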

\begin{proof}
Since $\gamma$ is skew-symmetric, we know by Proposition \ref{prop:skew_symmetric_clans} that $u := u(\gamma)$ and $v := v(\gamma)$ are signed elements of $S_{2n}$.  Thus the only question is whether $\#\{i \leq n \mid u(i) > n\}$ and $\#\{i \leq n \mid v(i) > n\}$ are both even.

For any skew-symmetric clan avoiding the pattern $(1,2,1,2)$, and for $i \leq n$, suppose that $c_i$ is a natural number.  Then the mate for $c_i$ must be either $c_{2n+1-i}$, or $c_j$ for another $j \leq n$.  Indeed, suppose that $c_i = c_j$ for $j > n$ but $j \neq 2n+1-i$.  Then we must have  either $(c_i, c_{2n+1-j}, c_j, c_{2n+1-i})$ (if $j < 2n+1-i$) or $(c_{2n+1-j},c_i,c_{2n+1-i},c_j)$ (if $j > 2n+1-i$) in the pattern $(1,2,1,2)$.  Now, by part (2) of the definition of a type $D$ clan, the case $c_i = c_{2n+1-i}$ is not allowed.  Since $\gamma$ is a type $D$ clan, any natural number occurring among the first $n$ characters of $\gamma$ has its mate also occurring among the first $n$ characters.

Now, recall that $u$ is defined by assigning numbers less than or equal to $n$ to $+$ signs and second occurrences of natural numbers, and numbers greater than $n$ to $-$ signs and first occurrences of natural numbers.  By the observation of the previous paragraph, the number of first occurrences of natural numbers among the first $n$ characters of $\gamma$ is the same as the number of pairs of equal natural numbers among the first $n$ characters.  By part (3) of the definition of a type $D$ clan, then, we have that the total number of $-$ signs and first occurrences of natural numbers occurring among the first $n$ characters is even.  Thus $\#\{i \leq n \mid u(i) > n\}$ is even.  A similar argument, involving second occurrences of natural numbers, applies to $v$.
\end{proof}

Suppose that $Y'$ is a $K'$-orbit closure on $X'$ coinciding with a Richardson variety, such that $Y := Y' \cap X \neq \emptyset$.  As we have just seen, $Y'$ is of the form $X_{u'}^{v'}$.  We wish now to see that $Y = X_{u'}^{v'} \cap X = X_u^v$, a Richardson variety of $X$.  This is a bit more difficult to see in the type $D$ case than it was in the type $C$ case.  The complication comes from the fact that, unlike in type $C$, here the Bruhat order on $W$ is not induced by the Bruhat order on $W'$.  Indeed, the Bruhat order on $W$ is weaker than the induced order.  The precise statement is as follows:

\begin{proposition}[{\cite[\S 3.5]{Billey-Lakshmibai-00}}]\label{prop:type_d_bruhat_order}
For $1 \leq i \leq 2n$, let $i' = 2n+1-i$, and let $|i| = \text{min}(i,i')$.  For $u,v \in W$, denote by $u',v'$ their images in $W'$.  Then $u < v$ in the Bruhat order on $W$ if and only if the following two conditions hold:
\begin{enumerate}
	\item For any $d$, $1 \leq d \leq n$, if the sets $\{u'(1),\hdots,u'(d)\}$ and $\{v'(1),\hdots,v'(d)\}$ are each rewritten in ascending order, then each element of the first set is less than or equal to the corresponding element of the second set.  Equivalently, $u' < v'$ in the Bruhat order on $W'$.
	\item Suppose that $a_1 \hdots a_{2n}$ is the one-line notation for $u'$, and that $b_1 \hdots b_{2n}$ is the one-line notation for $v'$.  For any $d \in [n]$, denote by $(c_1,\hdots,c_d)$ the numbers $a_1,\hdots,a_d$ rewritten in ascending order, and by $(e_1,\hdots,e_d)$ the numbers $b_1,\hdots,b_d$ rewritten in ascending order.  Then for any pair of compatible continuous subsequences $\{|c_{i+1}|,\hdots,|c_{i+r}|\}$, $\{|e_{i+1}|,\hdots,|e_{i+r}|\}$, which are both equal, as sets (i.e. without respect to order), to $\{n+1-r,\hdots,n\}$, the numbers
	\[ \#\{j \mid i+1 \leq j \leq i+r \text{ and } c_j > n\} \]
	and
	\[ \#\{j \mid i+1 \leq j \leq i+r \text{ and } e_j > n\} \]
	are either both even or both odd.
\end{enumerate}
\end{proposition}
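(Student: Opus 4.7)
The plan is to derive this criterion by combining the classical tableau characterization of Bruhat order in type $A$ with an additional ``parity'' constraint that encodes the two connected components of the type $D$ isotropic flag variety. Before diving in, I would make the following structural observation: the positive roots of $W$ pair into the two families $e_i - e_j$ and $e_i + e_j$ for $1 \leq i < j \leq n$; under the embedding $W \hookrightarrow W'$ they correspond, respectively, to the ``diagonal'' transpositions $(i,j)(i',j')$ and to the ``anti-diagonal'' transpositions $(i,j')(i',j)$ in $W'$. Not every $W'$-reflection $(a,b)$ is of one of these two forms, and this is precisely why condition (1) alone is insufficient to characterize the Bruhat order on $W$.

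For the forward direction, I would proceed by induction on Bruhat length. If $v$ covers $u$ in $W$, then $v = ut$ for a reflection $t \in W$ of one of the two types above. For such $t$, the inequality $u' < v'$ in $W'$ (hence condition (1)) follows by a short verification on one-line notation, possibly after decomposing $t$ into two commuting $W'$-reflections. For condition (2), the key point is that every $W$-reflection flips an \emph{even} number of signs; consequently, on any window $\{i+1, \ldots, i+r\}$ of the sorted sequence, the number of entries exceeding $n$ changes by an even amount under multiplication by $t$, so the parity is preserved. Iterating along a saturated chain from $u$ to $v$ then yields (2).

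For the converse, I would induct on $l(v) - l(u)$. Given (1) and (2), the tableau criterion in type $A$ supplies a $W'$-reflection $t' = (a,b)$ such that $u' \lessdot u' t' \leq v'$. If $t'$ is already the image of a $W$-reflection, we are finished. Otherwise $t'$ must be paired with a second $W'$-reflection to produce an element of $W$; the parity condition (2) is exactly what guarantees that this second reflection exists and that the resulting composite still lies weakly below $v'$ in $W'$. The main obstacle, and the step where the bookkeeping is heaviest, is verifying this matching: one must track how two commuting $W'$-reflections interact with the sorted-sequence windows appearing in (2), and check that parity compatibility forces the descent step to remain valid. Once this is established, the induction closes and the criterion follows, leaving only a translation of the resulting inequality back into the language of signed permutations.
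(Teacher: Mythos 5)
First, note that the paper does not prove this proposition at all: it is quoted verbatim, with citation, from Billey--Lakshmibai \cite[\S 3.5]{Billey-Lakshmibai-00}, so there is no in-paper argument for your proposal to be measured against. Judged on its own merits, your sketch is a reasonable plan but leaves the actual content of the theorem unproved. In the forward direction, the step ``iterating along a saturated chain then yields (2)'' does not close: condition (2) is a statement about windows that are \emph{compatible for the specific pair} $(u',v')$, i.e.\ index ranges $[i+1,i+r]$ in which \emph{both} sorted prefixes have absolute values exactly $\{n-r+1,\hdots,n\}$. An intermediate element $w$ of a saturated chain from $u$ to $v$ need not admit that same window, so knowing the parity statement for each consecutive cover does not transfer to the pair $(u,v)$; you would need a separate argument that conditions (1)+(2) are transitive, which is part of what has to be proved. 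Likewise, the per-cover claim that ``the number of entries exceeding $n$ changes by an even amount on any window'' is asserted, not verified; a reflection $s_{e_i+e_j}$ moves two values across the threshold $n$, but whether both moved values land inside a given sorted-prefix window (and whether the window remains compatible at all) is exactly the case analysis you skip.

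In the converse, the gap is even more central. You locate a type $A$ reflection $t'$ with $u' \lessdot u't' \leq v'$ and propose to ``pair'' it with a second commuting reflection to land back in $W$, declaring that condition (2) ``is exactly what guarantees'' this works; but that matching argument \emph{is} the theorem, and it is deferred rather than carried out. Moreover, even granting the matching, your induction is structurally incomplete: to recurse on $l(v)-l(u)$ you must show that the new pair $(ut, v)$ again satisfies \emph{both} conditions, whereas you only argue that the composite stays weakly below $v'$ in $W'$ (condition (1)); re-establishing the parity condition (2) for the new pair is never addressed, and it is not automatic. As it stands, the proposal is an outline of a strategy in the spirit of the known proofs (Proctor, Billey--Lakshmibai), not a proof; the correct course in the context of this paper is to do what the author does and cite \cite[\S 3.5]{Billey-Lakshmibai-00}, or else supply the full combinatorial verification of the two deferred steps.
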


This complicates matters somewhat, because the type $D$ analogue of Corollary \ref{cor:type_c_schubert_vars} does not hold in general.  Indeed, using notation as in the proof of that corollary, if $C_w = D_{w'} \cap X$ is a Schubert cell of $X$, and if $D_{u'} < D_{w'}$ in the Bruhat order on Schubert cells of $X'$, it is no longer necessarily the case that $C_u < C_w$ in the Bruhat order on Schubert cells of $X$.  The upshot is that in general, the intersection of a Schubert variety $X_{w'}$ of $X'$ with $X$ is not the Schubert variety $X_w$ of $X$ --- indeed, such an intersection need only be a \textit{union} of Schubert varieties.

However, we are dealing with type $D$ elements $u',v'$ of a very specific type.  Namely, $u'$ is a shuffle of $n,n-1,\hdots,1$ and $2n,2n-1,\hdots,n+1$, while $v'$ is a shuffle of $1,\hdots,n$ and $n+1,\hdots,2n$.  The next proposition says that for such type $D$ elements of $S_{2n}$, the potential complications arising due to the weaker Bruhat order in type $D$ do not occur.

\begin{prop}\label{prop:type_d_bruhat_for_special_perms}
Suppose that $u \in W$ is such that $u' \in W'$ is a shuffle of $n,n-1,\hdots,1$ and $2n,2n-1,\hdots,n+1$, and that $v \in W$ is such that $v' \in W'$ is a shuffle of $1,\hdots,n$ and $n+1,\hdots,2n$.  For any element $w \in W$, we have that $w < u$ (resp. $w > v$) as elements of $W$ if and only if $w' < u'$ (resp. $w' > v'$) as elements of $W'$.
\end{prop}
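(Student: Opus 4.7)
The forward direction of both equivalences follows immediately from condition~(1) of Proposition~\ref{prop:type_d_bruhat_order}: $w \leq u$ in $W$ forces $w' \leq u'$ in $W'$, and similarly for $v$. The content of the statement is therefore the reverse implication, and the plan is to show that for $u'$ (resp.\ $v'$) of the specified shuffle form, condition~(2) of Proposition~\ref{prop:type_d_bruhat_order} is automatically satisfied as soon as $w' \leq u'$ (resp.\ $w' \geq v'$) holds in $W'$ with $w \in W$. I will focus on the $u$ direction; the $v$ direction proceeds symmetrically, with the roles of the ascending and descending halves swapped.

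Fix $d \in \{1, \ldots, n\}$, and let $a$ (resp.\ $b$) denote the number of positions $i \leq d$ for which $u'(i) \leq n$ (resp.\ $u'(i) > n$). The shuffle hypothesis on $u'$ pins down the ascending rearrangement of $u'(1), \ldots, u'(d)$ completely: it must equal $(n-a+1, \ldots, n, 2n-b+1, \ldots, 2n)$. Applying the $|\cdot|$-operation produces the unimodal sequence $(n-a+1, \ldots, n, b, b-1, \ldots, 1)$, a form rigid enough to classify exactly which continuous subsequences have underlying set $\{n+1-r, \ldots, n\}$. A direct enumeration yields only two types of compatible subsequence for $u'$: (A) subsequences lying entirely within the initial increasing block, which must occupy positions $a-r+1, \ldots, a$ with $a \geq r$ and contain zero values $>n$; and (B) for $d = n$ only, subsequences starting at position $1$ and extending into the decreasing block, containing exactly $r-a$ values $>n$.

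For each such subsequence, I would then examine the corresponding subsequence in the sorted prefix of $w'$. The Bruhat constraint $w' \leq u'$ forces the $j$th $w'$-value to be bounded by $u'$'s $j$th sorted value, while the requirement that the $|\cdot|$-set equal $\{n+1-r, \ldots, n\}$ confines the $w'$-values at these positions to $\{n-r+1, \ldots, n+r\}$. In case~(A), these constraints force the $w'$-values to coincide with $u'$'s, so both counts are zero and the parity condition of condition~(2) holds trivially. In case~(B), $w'$'s values split as $a_w := \#\{i \leq n : w'(i) \leq n\}$ low entries in $\{n-r+1, \ldots, n\}$ and $r - a_w$ high entries in $\{n+1, \ldots, n+r\}$, so the count of values $>n$ equals $r - a_w$ (with compatibility pinning $a_w \leq r$ and imposing $a_w$ small values on $w'$).

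The key input is then the type-$D$ parity condition: membership $u \in W$ at $d = n$ forces $b$ even, and membership $w \in W$ forces $b_w := n - a_w$ even, so that $a \equiv a_w \equiv n \pmod{2}$. This yields $r - a \equiv r - a_w \pmod{2}$, so the parity counts in case~(B) match automatically, completing the verification of condition~(2). I expect the main obstacle to be the case~(B) bookkeeping: one must enumerate every configuration by which $w'$ can realize $\{n+1-r, \ldots, n\}$ as a $|\cdot|$-set at positions $1, \ldots, r$, including mixed-sign realizations where $w'$'s sorted prefix departs substantially from shuffle form, and check that each reduces uniformly to the type-$D$ evenness of $b$ and $b_w$. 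Once this is handled, the same argument, applied to the shuffle of increasing halves $(1, \ldots, n)$ and $(n+1, \ldots, 2n)$, gives the $v$ case.
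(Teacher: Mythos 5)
Your proposal is correct and takes essentially the same route as the paper: reduce to checking that condition (2) of Proposition \ref{prop:type_d_bruhat_order} holds automatically, use the rigidity of the sorted prefix of the shuffle to see that the only compatible continuous subsequences are those inside the increasing block and (at $d=n$ only) those spanning the junction, and match parities there via the type-$D$ evenness of the number of entries exceeding $n$. The only organizational difference is in the junction case, where you derive $a_w \le r$ from compatibility and compare the counts $r-a$ and $r-a_w$ directly, while the paper instead isolates the unique potentially bad parity configuration and shows it contradicts compatibility --- the same content in slightly different bookkeeping.
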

\begin{proof}
By Proposition \ref{prop:type_d_bruhat_order}, if $w < u$, we automatically have $w' < u'$, so we need only prove the converse.  What must be verified is that condition (2) of Proposition \ref{prop:type_d_bruhat_order} automatically holds if $w' < u'$, provided that $u'$ is a shuffle of the specified type.

Let $a_1,\hdots,a_n$ be the first $n$ values of $w'$, and $b_1,\hdots,b_n$ the first $n$ values of $u'$.    Choose a $d$ with $1 \leq d \leq n$.  Let $(e_1,\hdots,e_d)$ be $b_1,\hdots,b_d$ rewritten in ascending order, and let $(c_1,\hdots,c_d)$ be $a_1,\hdots,a_d$ rewritten in ascending order.  Since we know that $b_1,\hdots,b_n$ is a shuffle of $n,\hdots,k+1$ and $2n,2n-1,\hdots,2n-k+1$ for some $k$, the sequence $(e_1,\hdots,e_d)$ must be of the form
\[ (l,l+1,\hdots,n,2n-m+1,2n-m+2,\hdots,2n) \]
for some $k+1 \leq l \leq n$ and $m \leq k$.  Taking absolute values as defined in the statement of Proposition \ref{prop:type_d_bruhat_order}, we have the sequence
\[ (l,l+1,\hdots,n,m,m-1,\hdots,1). \]

Consider the possible length $r$ continuous subsequences of this sequence which are, as sets, of the form $\{n-r+1,n-r+2,\hdots,n\}$.  One possibility is a subsequence of the form $(l',l'+1,\hdots,n)$ with $l' \geq l$.  Clearly, no elements of such a subsequence correspond to elements of the sequence $(e_1,\hdots,e_d)$ which are greater than $n$.  Since $w' < u'$, each $c_i$ in the matching subsequence of $(c_1,\hdots,c_d)$ is less than or equal to the corresponding $e_i$.  In particular, none of the $c_i$ are larger than $n$ either.  So if the matching subsequence of the $|c_i|$ also gives $\{n-r+1,\hdots,n\}$, condition (2) of Proposition \ref{prop:type_d_bruhat_order} is satisfied.

The other possibility occurs only when $d=n$.  In this case, the sequence $(e_1,\hdots,e_n)$ is $(k+1,\hdots,n,2n-k+1,\hdots,2n)$, with absolute value sequence $(k+1,k+2,\hdots,n,k,k-1,\hdots,1)$.  Here, any subsequence
\[ (k+1,k+2,\hdots,n,k,k-1,\hdots,k-j) \]
for some $j$ with $0 \leq j \leq k-1$ is of the appropriate form.  This absolute value sequence corresponds to the sequence $(e_1,\hdots,e_{n-k+j+1})$.  The elements of this sequence switch from being less than or equal to $n$ to being greater than $n$ at position $n-k+1$ --- that is, $(e_1,\hdots,e_{n-k})$ are all less than or equal to $n$, and $(e_{n-k+1},\hdots,e_{n-k+j+1})$ are all greater than $n$.  Consider the corresponding sequence $(c_1,\hdots,c_{n-k+j+1})$.  This sequence must switch from being less than or equal to $n$ to being greater than $n$ at a position at or beyond $n-k+1$ (since each $c_i$ is less than or equal to the corresponding $e_i$), which is congruent modulo $2$ to $n-k+1$ (since the number of elements of $(e_1,\hdots,e_n)$ and the number of elements of $(c_1,\hdots,c_n)$ greater than $n$ must both be even).  From these two facts, one sees that the only way that the numbers
\[ \#\{i \mid 1 \leq i \leq n-k+j+1 \text{ and } e_i > n\} \]
and
\[ \#\{i \mid 1 \leq i \leq n-k+j+1 \text{ and } c_i > n\} \]
can differ in parity is if the first is odd, and the second is zero.  If the first number is odd, then there are an odd number of terms occurring after position $n-k+j+1$, since the number of $e_i$ greater than $n$ must be even, and since all $e_i$ beyond this position are greater than $n$ (the $e_i$ being ordered).  This means that the number of $c_i$ beyond position $n-k+j+1$ which are \textit{less than} $n$ is odd, since the even number of $c_i$ which are greater than $n$ all occur beyond this position.  In particular, there is at least one $c_i$ beyond position $n-k+j+1$ which is less than $n$.  But this says that $\{|c_1|,\hdots,|c_{n-k+j+1}|\}$ cannot possibly be equal to $\{k-j,\hdots,n\}$ as a set, since all elements of it are less than $n$ (the $c_i$ being ordered).  Thus we see that condition (2) of Proposition \ref{prop:type_d_bruhat_order} cannot be violated.

We apply a similar argument for $v$.  Now let $a_1,\hdots,a_n$ be the first $n$ values of $v'$, and $b_1,\hdots,b_n$ the first $n$ values of $w'$.  Choose $d$ and define $(c_1,\hdots,c_d)$ and $(e_1,\hdots,e_d)$ as above.  We know that $a_1,\hdots,a_n$ is a shuffle of $1,\hdots,j$ and $n+1,\hdots,2n-j$, so the sequence $(c_1,\hdots,c_d)$ must be of the form
\[ (1,\hdots,l,n+1,\hdots,n+m) \]
for $1 \leq l \leq j$ and $1 \leq m \leq n-j$.  Taking absolute values, we obtain
\[ (1,\hdots,l,n,n-1,\hdots,n-m+1). \]

One possible continuous subsequence of this sequence which gives $\{n-r+1,\hdots,n\}$ as a set is $n,n-1,\hdots,n-r+1$ for $r \leq m$.  Every element of such a sequence corresponds to an element of the sequence $(c_1,\hdots,c_d)$ which is greater than $n$.  Since $w' > v'$, each $e_i$ in the matching subsequence of $(e_1,\hdots,e_d)$ is greater than or equal to the corresponding $c_i$.  In particular, all such $e_i$ are also greater than $n$.  So if the matching subsequence of the $|e_i|$ also gives $\{n-r+1,\hdots,n\}$, condition (2) of Proposition \ref{prop:type_d_bruhat_order} is satisfied.

The other possibility occurs only when $d = n$.  In this case, the absolute value sequence for $(c_1,\hdots,c_n)$ is $(1,\hdots,j,n,n-1,\hdots,j+1)$.  Here, any subsequence of the form $(k,k+1,\hdots,j,n,n-1,\hdots,j+1) $ for some $k$ with $1 \leq k \leq j$ is of the appropriate form.  This corresponds to the sequence $(c_k,\hdots,c_n)$.  This sequence contains all $c_i$ which are greater than $n$, necessarily an even number.  Since each $e_i$ is greater than or equal to the corresponding $c_i$, the only way that the numbers
\[ \#\{i \mid k \leq i \leq n \text{ and } c_i > n\} \]
and
\[ \#\{i \mid k \leq i \leq n \text{ and } e_i > n\} \]
can differ in parity is if the second number is odd.  Since an even number of $(e_1,\hdots,e_n)$ are greater than $n$, and since the $e_i$ are ordered, this can only occur if all $(e_k,\hdots,e_n)$ are greater than $n$ and there are an odd number of them.  In this case, there must also be an odd number of $e_i > n$ with $i < k$.  In particular, there is at least one such $e_i$.  Thus the absolute value set $\{|e_k|,\hdots,|e_n|\}$ cannot possibly be equal to $\{k,\hdots,n\}$, since all elements of the sequence $(e_k,\hdots,e_n)$ are at least $n+2$, and hence have absolute value at most $n-1$.  Thus again, we see that condition (2) of Proposition \ref{prop:type_d_bruhat_order} cannot be violated in this situation.  This completes the proof.
\end{proof}

\begin{corollary}\label{cor:type-d-richardson}
Suppose that $\gamma$ is a type $D$ clan avoiding the pattern $(1,2,1,2)$.  Letting $u' = u(\gamma)$, $v' = v(\gamma)$ be the corresponding type $D$ elements of $S_{2n}$, the $K$-orbit closure $Y_{\gamma} := \overline{Q_{\gamma}}$ is the Richardson variety $X_u^v$.
\end{corollary}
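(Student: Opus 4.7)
The plan is to mimic the type $C$ proof (Corollary \ref{cor:type-c-richardson}), using the chain of identifications $Y_\gamma = Y'_\gamma \cap X = (X_{u'} \cap X^{v'}) \cap X = (X_{u'} \cap X) \cap (X^{v'} \cap X)$, and then to identify $X_{u'} \cap X$ with $X_u$ and $X^{v'} \cap X$ with $X^v$. The first two equalities come for free: the identification $Y'_\gamma = X_{u'}^{v'}$ is Theorem \ref{thm:type-a-richardson}, and the identification $Y_\gamma = Y'_\gamma \cap X$ is immediate from Proposition \ref{prop:type-D-orbit-param} (we have $Q_\gamma = Q'_\gamma \cap X$, so passing to closures and using that $X$ is closed in $X'$ gives the statement). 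The preceding proposition moreover guarantees that $u(\gamma)$ and $v(\gamma)$ are type $D$ elements of $S_{2n}$, hence genuinely of the form $u', v'$ for a pair $u,v \in W$.

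The substantive content is therefore the type $D$ analogue of Corollary \ref{cor:type_c_schubert_vars}, restricted to the special elements at hand. First, exactly as in the type $C$ proof, each Schubert cell $C_w$ of $X$ coincides with $D_{w'} \cap X$: the latter is non-empty, stable under the Borel $B = B' \cap G$, and a counting argument shows it is a single $B$-orbit. What fails in type $D$ is that, in general, if one writes $X_{u'} \cap X = \bigcup_{w' \leq u',\, w \in W} C_w$, the indexing set need not be $\{w \in W \mid w \leq u\}$, because the Bruhat order on $W$ is strictly weaker than the induced order from $W'$ (cf.\ Proposition \ref{prop:type_d_bruhat_order}). This is the main obstacle.

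The obstacle is precisely what Proposition \ref{prop:type_d_bruhat_for_special_perms} was set up to resolve: for elements $u$ whose image $u'$ is a shuffle of $n,n-1,\ldots,1$ with $2n,2n-1,\ldots,n+1$, and for elements $v$ whose image $v'$ is a shuffle of $1,\ldots,n$ with $n+1,\ldots,2n$, the induced Bruhat order from $W'$ agrees with the Bruhat order on $W$. Since by construction $u(\gamma)$ and $v(\gamma)$ are shuffles of exactly these two types, we may conclude
\[ X_{u'} \cap X = \bigcup_{w' \leq u',\, w \in W} C_w = \bigcup_{w \leq u} C_w = X_u, \]
and analogously $X^{v'} \cap X = X^v$ (the latter by the same argument applied to the opposite Bruhat cell decomposition, or equivalently by observing that opposite Schubert varieties in $X$ are obtained from those in $X'$ by intersecting with $X$, invoking the ``$w' \geq v' \iff w \geq v$'' half of Proposition \ref{prop:type_d_bruhat_for_special_perms}).

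Assembling these pieces gives
\[ Y_\gamma \;=\; Y'_\gamma \cap X \;=\; X_{u'}^{v'} \cap X \;=\; (X_{u'} \cap X) \cap (X^{v'} \cap X) \;=\; X_u \cap X^v \;=\; X_u^v, \]
which is the claimed identification. No further calculation is required beyond the appeals already indicated.
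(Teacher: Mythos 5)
Your proposal is correct and follows essentially the same route as the paper: identify $Y_\gamma = Y'_\gamma \cap X$ and $Y'_\gamma = X_{u'}^{v'}$, reduce to showing $X_{u'} \cap X = X_u$ and $X^{v'} \cap X = X^v$ via the cell-by-cell argument of Corollary \ref{cor:type_c_schubert_vars}, and use Proposition \ref{prop:type_d_bruhat_for_special_perms} to overcome the failure of the induced Bruhat order for the special shuffle elements $u,v$. Your explicit remark that the weaker type $D$ Bruhat order is the only obstruction, and that it is neutralized precisely by that proposition, matches the paper's (more terse) proof exactly.
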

\begin{proof}
One argues just as in the proof of Corollary \ref{cor:type_c_schubert_vars} that for any $w \in W$, the Schubert cell $C_w$ of $X$ is precisely $D_{w'} \cap X$, with $D_{w'}$ the type $A$ Schubert cell corresponding to $w'$.  Proposition \ref{prop:type_d_bruhat_for_special_perms} allows us to argue just as in the proof of Corollary \ref{cor:type_c_schubert_vars} that $X_u = X_{u'} \cap X$, and that $X^v = X^{v'} \cap X$.  Thus
\[ Y_{\gamma} = Y_{\gamma}' \cap X = X_{u'}^{v'} \cap X = (X_{u'} \cap X) \cap (X^{v'} \cap X) = X_u \cap X^v = X_u^v. \]
\end{proof}

The previous corollary shows that we will be able to compute the Schubert product $S_{w_0u} \cdot S_v$ by once again applying Theorem \ref{thm:brion}.  Observe that $v$, as a signed permutation, is a shuffle of $1,\hdots,k$ and $-n,\hdots,-(k+1)$ for some $k$.  (Note that $n-k$ must be even.)  If $n$ is even, then $w_0$ is the signed permutation which flips all signs, so since $u$ is a shuffle of $n,n-1,\hdots,j+1$ and $-1,\hdots,-j$ for some (even) $j$, $w_0u$ is a shuffle of $1,\hdots,j$ and $-n,\hdots,-(j+1)$.  However, if $n$ is odd, then $w_0$ is the signed permutation which sends all numbers \textit{except} $n$ to their negatives, so in that case, $w_0u$ is a shuffle of $1,\hdots,j$ and $n,-(n-1),\hdots,-(j+1)$.

\begin{definition}\label{def:type-d-shuffles}
A \textbf{type D pair of signed shuffles} is a pair $(u,v)$ such that, for some $j,k$ with $j$ and $n-k$ both even, 
\begin{enumerate}
	\item $u$ is a shuffle of $1,\hdots,j$ and $-n,\hdots,-(j+1)$ if $n$ is even, or a shuffle of $1,\hdots,j$ and $n,-(n-1),\hdots,-(j+1)$ if $n$ is odd.
	\item $v$ is a shuffle of $1,\hdots,k$ and $-n,\hdots,-(k+1)$.
\end{enumerate}
\end{definition}

Note once again that \textit{any} product $S_u \cdot S_v$ with $(u,v)$ a type $D$ pair of signed shuffles can be computed using Theorem \ref{thm:brion}, since all Richardson varieties of the appropriate form arise as $K$-orbit closures:
\begin{equation}\label{eqn:type-d-schubert-prod}
	 S_u \cdot S_v = [X_{w_0u}] \cdot [X^v] = [X_{w_0u}^v] = [X_{(w_0u)'}^{v'} \cap X] = [X_{w_0'u'}^{v'} \cap X] =  [Y_{\gamma(u',v')}' \cap X] = [Y_{\gamma(u',v')}].
\end{equation}

We remark that there is an inconsistency in notation here when $n$ is odd.  Recall that the notation $\gamma(u',v')$ previously meant the clan $\gamma$ such that $w_0 u' =u(\gamma)$ and $v' = v(\gamma)$.  (Here, $w_0$ denotes the long element of $W' = S_{2n}$, not of $W$.)  In the string of equalities above, $\gamma(u',v')$ means the clan $\gamma$ such that $w_0' u' = u(\gamma)$ and $v' = v(\gamma)$, where $w_0'$ is the image in $W'$ of the long element $w_0$ of $W$.  When $n$ is even, $w_0'$ \textit{is} the long element of $W'$, but when $n$ is odd, it is not.  (Indeed, when $n$ is odd, $w_0' = w_0 s_n$, with $w_0$ the long element of $S_{2n}$, and $s_n$ the simple transposition $(n,n+1)$).  The author prefers to offer this caveat to the reader, rather than invent a separate notation for odd $n$.

We now use the facts developed in this section to give a positive formula for structure constants $c_{u,v}^w$ when $(u,v)$ is a type $D$ pair of signed shuffles.  Once again, we must understand the $M(W)$-action on $K \backslash X$ at the level of type $D$ clans.  As in the previous section, we describe this action in purely combinatorial terms, as a sequence of operations on type $D$ clans.  References are \cite{Matsuki-Oshima-90,McGovern-Trapa-09}.

Let $\frt$ be the Cartan subalgebra of $\frg = \text{Lie}(G)$ consisting of diagonal matrices 
\[ \text{diag}(a_1,\hdots,a_n,-a_n,\hdots,-a_1). \]
Let $x_1,\hdots,x_n$ be coordinates on $\frt$, with 
\[ x_i(\text{diag}(a_1,\hdots,a_n,-a_n,\hdots,-a_1)) = a_i. \]
Order the simple roots in the following way:  $\ga_i = x_i - x_{i+1}$ for $i=1,\hdots,n-1$, and $\ga_n = x_{n-1} + x_n$.  For $i=1,\hdots,n$, let $s_i$ denote $s_{\ga_i}$.  Let $\gamma=(c_1,\hdots,c_{2n})$ be any type $D$ clan.  Given a simple reflection $s_i$ for $i=1,\hdots,n-1$, consider the following possible operations on $\gamma$:
\begin{enumerate}[(a)]
	\item Interchange the characters in positions $i,i+1$ \textit{and} the characters in positions $2n-i,2n-i+1$.
	\item Replace the characters in positions $i,i+1$ \textit{and} the characters in positions $2n-i,2n-i+1$ each by a pair of matching natural numbers.
\end{enumerate}

Then the monoidal action of $M(W)$ of $s_i$ ($i=1,\hdots,n-1$) on $\gamma$ is as follows:
\begin{enumerate}
	\item If $c_i$ is a sign, $c_{i+1}$ is a number, and the mate for $c_{i+1}$ occurs to the right of $c_{i+1}$, $s_i \cdot \gamma$ is obtained from $\gamma$ by operation (a).
	\item If $c_i$ is a number, $c_{i+1}$ is a sign, and the mate for $c_i$ occurs to the left of $c_i$, $s_i \cdot \gamma$ is obtained from $\gamma$ by operation (a).
	\item If $c_i$ and $c_{i+1}$ are unequal natural numbers, with the mate for $c_i$ occurring to the left of the mate for $c_{i+1}$, \textit{and} if $(c_i,c_{i+1}) \neq (c_{2n-i},c_{2n-i+1})$, then $s_i \cdot \gamma$ is obtained from $\gamma$ by operation (a).
	\item If $c_i$ and $c_{i+1}$ are opposite signs, then $s_i \cdot \gamma$ is obtained from $\gamma$ by operation (b).
	\item If none of the above hold, them $s_i \cdot \gamma = \gamma$.
\end{enumerate}

We give examples of (1)-(4) above:
\begin{enumerate}
	\item $s_2 \cdot (+,-,1,1,2,2,+,-) = (+,1,-,1,2,+,2,-)$
	\item $s_2 \cdot (1,1,-,+,-,+,2,2) = (1,-,1,+,-,2,+,2)$
	\item $s_1 \cdot (1,2,1,2,3,4,3,4) = (2,1,1,2,3,4,4,3)$
	\item $s_1 \cdot (+,-,1,1,2,2,+,-) = (3,3,1,1,2,2,4,4)$
\end{enumerate}

The action of $s_n$ is a bit different.  The most concise way to define it is as follows:  Given a type $D$ clan $\gamma$, let $\text{Flip}(\gamma)$ denote the clan obtained from $\gamma$ by interchanging the characters in positions $n,n+1$.  Then
\[ s_n \cdot \gamma = \text{Flip}(s_{n-1} \cdot \text{Flip}(\gamma)). \]

When $n = 3$, we have the following examples:
\begin{enumerate}
	\item $s_3 \cdot (+,+,+,-,-,-) = (+,1,2,1,2,+)$.  We apply Flip to obtain $(+,+,-,+,-,-)$, act by $s_2$ on the result to obtain $(+,1,1,2,2,+)$, and finally apply Flip once more to obtain $(+,1,2,1,2,+)$.
	\item $s_3 \cdot (1,-,1,2,+,2) = (1,2,+,-,1,2)$.  We apply Flip to obtain $(1,-,2,1,+,2)$, apply $s_2$ to obtain $(1,2,-,+,1,2)$, and apply Flip again to obtain $(1,2,+,-,1,2)$.
	\item $s_3 \cdot (-,1,1,2,2,+) = (-,1,1,2,2,+)$.  We Flip to obtain $(-,1,2,1,2,+)$, apply $s_2$ to the result (which does nothing), and Flip again, which returns us to the clan we started with.
\end{enumerate}

Unlike the rule of Theorem \ref{thm:type_c_structure_constants}, our rule in type $D$ is multiplicity-free, due to the following fact:
\begin{proposition}
In the weak order graph for $K \backslash X$, all edges are single.
\end{proposition}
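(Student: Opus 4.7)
The plan is to verify that no simple root $\alpha$ is non-compact imaginary of type II for any $K$-orbit on $X$; since type II is the only way a double edge can arise, the proposition will follow. Equivalently, in each nontrivial monoidal-action case listed before the proposition, I will show that the cross action of the corresponding simple reflection also moves the clan, making the root either complex or non-compact imaginary of type I.

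For the simple roots $\alpha_i$ with $i \in \{1, \ldots, n-1\}$, the image of $s_i \in W$ in $W' = S_{2n}$ under the embedding of Section \ref{sect:type-d} is the double transposition $(i, i+1)(2n-i, 2n-i+1)$, so its cross action simultaneously swaps the characters at positions $(i, i+1)$ and at $(2n-i, 2n-i+1)$. In cases (1), (2), and (4), the two characters at positions $i$ and $i+1$ are of different types (sign vs.\ number, or opposite signs), so the swap at $(i, i+1)$ already alters $\gamma$. Case (3) is the delicate one: using the hypothesis $(c_i, c_{i+1}) \neq (c_{2n-i}, c_{2n-i+1})$ (which excludes pattern P), skew-symmetry, and type $D$ property (2), I would rule out each potential coincidence among $c_i, c_{i+1}, c_{2n-i}, c_{2n-i+1}$. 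For example, $c_i = c_{2n-i+1}$ would place two copies of a natural number at the mirror-symmetric positions $i$ and $2n+1-i$, forbidden by property (2); skew-symmetry turns $c_i = c_{2n-i}$ back into pattern P; the remaining coincidences are excluded by variants of the same two observations. Hence these four characters are distinct natural numbers, and the simultaneous cross-action swap visibly changes $\gamma$.

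For $\alpha_n$, the image of $s_n$ in $W'$ is the double transposition $(n-1, n+1)(n, n+2)$, so its cross action swaps $c_{n-1} \leftrightarrow c_{n+1}$ and $c_n \leftrightarrow c_{n+2}$. A direct calculation shows that this cross action is the Flip-conjugate of the cross action of $s_{n-1}$, paralleling the relation $s_n \cdot \gamma = \mathrm{Flip}(s_{n-1} \cdot \mathrm{Flip}(\gamma))$ at the level of the monoidal action. Thus the $\alpha_n$ case reduces to the $\alpha_{n-1}$ analysis applied to $\mathrm{Flip}(\gamma)$. The main obstacle here is that $\mathrm{Flip}(\gamma)$ need not itself be a type $D$ clan, so one must re-check that the ingredients used in the case (3) argument still hold; concretely, property (2) applied to $\gamma$ guarantees $\gamma_{n-1} \neq \gamma_{n+2}$ and $\gamma_n \neq \gamma_{n+1}$ as naturals, and these are precisely the non-coincidences at positions $n-1, n, n+1, n+2$ of $\mathrm{Flip}(\gamma)$ needed to push the case (3) argument through. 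The analogues of cases (1), (2), (4) are immediate as before.
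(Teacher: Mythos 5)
Your proposal is correct, but it takes a more laborious route than the paper's proof. Since a double edge can arise only from a non-compact imaginary root of type II, i.e.\ one whose cross action \emph{fixes} the orbit, it suffices to examine the non-compact imaginary cases alone; the paper identifies these precisely (for $\alpha_i$, $i<n$, exactly when $c_i,c_{i+1}$ are opposite signs, and for $\alpha_n$ exactly when $(c_{n-1},c_n,c_{n+1},c_{n+2})=(+,+,-,-)$ or $(-,-,+,+)$) and then simply observes that the cross action interchanges signs (resp.\ the two patterns), so every such root is type I. You instead verify that the cross action moves the clan in \emph{every} case where $s_i\cdot\gamma\neq\gamma$, including the complex cases (1)--(3); this is logically sound --- if the cross action moves the orbit the root cannot be type II, which is all that is needed --- but it forces you into the delicate case (3) analysis with skew-symmetry and type $D$ property (2), and into re-checking hypotheses on $\mathrm{Flip}(\gamma)$ for $\alpha_n$, none of which is necessary because complex roots always give simple edges. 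Your case (3) argument does go through (crosswise coincidences are killed by property (2), a single ``parallel'' coincidence forces the full excluded pattern via skew-symmetry, and the remaining coincidences contradict $c_i\neq c_{i+1}$), so there is no gap, only extra work. Two small caveats: moving the clan under the cross action does not by itself show the root is ``complex or non-compact imaginary of type I'' (it only rules out type II, which suffices); and in the $\alpha_n$ reduction you should also record that $\mathrm{Flip}(\gamma)$ is still skew-symmetric (Flip swaps the two central, mirror, positions), since the skew-symmetry implications are used alongside property (2) to exclude the remaining coincidences. Finally, note the paper's one-line alternative: the claim also follows from \cite[Corollary 2]{Brion-01}.
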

\begin{proof}
Similarly to the type $C$ case described in the previous section, $\ga_i$ ($i=1,\hdots,n-1$) is non-compact imaginary for $\gamma$ if and only if $c_i$ and $c_{i+1}$ are opposite signs.  The cross-action of $s_i$ in this case is to interchange the opposite signs in positions $i,i+1$ as well as those in positions $2n-i,2n-i+1$.  Thus $s_i \times \gamma \neq \gamma$, so $s_i$ is type I.  The simple root $\ga_n$ is non-compact imaginary for $\gamma$ if and only if $(c_{n-1},c_n,c_{n+1},c_{n+2}) = (+,+,-,-)$ or $(-,-,+,+)$.  Here, the cross-action of $s_n$ is by the permutation action of $s_n' = (n-1,n+1)(n,n+2) \in S_{2n}$, thus it interchanges these two patterns.  In particular, in each case we have $s_n \times \gamma \neq \gamma$, so that $s_n$ is of type I.  Thus all non-compact imaginary roots are of type I, so that all edges in the weak order graph are single.

Alternatively, the claim here follows from \cite[Corollary 2]{Brion-01}, and indeed this case is mentioned explicitly in the discussion immediately following that corollary.
\end{proof}

The combinatorics of the monoidal action of $W$ described above, together with Corollary \ref{cor:type-d-richardson}, Theorem \ref{thm:brion}, and the previous proposition then give us the following multiplicity-free special case rule for Schubert constants in type $D$:
\begin{theorem}\label{thm:type-d-structure-consts}
Suppose $(u,v)$ is a type $D$ pair of signed shuffles, with $w_0u \geq v$.  Let $u',v'$ be the images of $u,v$ in $S_{2n}$, and let $\gamma = \gamma(u',v')$ be the corresponding type $D$ clan avoiding the pattern $(1,2,1,2)$.  (Recall our remarks on this notation immediately following displayed equation (\ref{eqn:type-d-schubert-prod}) in the event that $n$ is odd.)  Let $\gamma_0$ be the type $D$ clan corresponding to the open, dense $K$-orbit on $X$, as described immediately following the statement of Proposition \ref{prop:type-D-orbit-param}.

Then for any $w \in W$,
\[ c_{u,v}^w = 
\begin{cases}
	1 & \text{ if $l(w) = l(u) + l(v)$ and $w \cdot \gamma = \gamma_0$} \\
	0 & \text{ otherwise.}
\end{cases}
\]
\end{theorem}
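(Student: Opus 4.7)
The plan is a direct synthesis of the tools built up in this section. By the chain of equalities in the displayed equation (\ref{eqn:type-d-schubert-prod}), together with Corollary \ref{cor:type-d-richardson}, the product $S_u \cdot S_v$ is realized as the fundamental class $[Y_\gamma]$ of the $K$-orbit closure on $X$ corresponding to the type $D$ clan $\gamma = \gamma(u',v')$. The problem thus reduces to expanding $[Y_\gamma]$ in the Schubert basis, and for this I would apply Theorem \ref{thm:brion}, obtaining $[Y_\gamma] = \sum_{w \in W(Y_\gamma)} 2^{D(w)} S_w$.

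Two simplifications then collapse Brion's formula into the stated rule. First, the proposition established immediately above shows that every non-compact imaginary simple root is of type I, so every edge in the weak order graph on $K \backslash X$ is single; consequently $D(w) = 0$ identically, and each surviving coefficient equals $1$ rather than a power of $2$. Second, I would translate $W(Y_\gamma)$ into the two combinatorial conditions on $w$. Since $Y_\gamma = X_{w_0 u}^v$, its dimension is $l(w_0 u) - l(v) = l(w_0) - l(u) - l(v)$, so its codimension is $l(u) + l(v)$, which accounts for the length condition $l(w) = l(u) + l(v)$ in the definition of $W(Y_\gamma)$. The remaining condition $w \cdot Y_\gamma = G/B$ becomes, at the level of clans, the condition $w \cdot \gamma = \gamma_0$ via the explicit description of the monoidal $M(W)$-action on type $D$ clans given above, since $\gamma_0$ is by construction the clan corresponding to the open dense $K$-orbit, i.e.\ to the top vertex $G/B$ of the weak order graph.

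The main subtlety I anticipate is the bookkeeping around the long element when $n$ is odd: the clan $\gamma(u',v')$ is built using $w_0' u'$, where $w_0'$ is the image in $W'$ of the long element of $W$, and this is \emph{not} the long element of $W' = S_{2n}$ (cf.\ the caveat immediately following (\ref{eqn:type-d-schubert-prod})). One must confirm that the Richardson variety identified via Corollary \ref{cor:type-d-richardson} really is $X_{w_0 u}^v$ under this convention, so that the equality $[X_{w_0 u}^v] = [Y_{\gamma(u',v')}]$ in (\ref{eqn:type-d-schubert-prod}) is consistent with the hypothesis $w_0 u \geq v$ ensuring non-triviality. Once this bookkeeping is carried out, all ingredients are in place and the theorem follows by reading off Brion's expansion with every $2^{D(w)}$ replaced by $1$.
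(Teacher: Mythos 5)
Your proposal is correct and follows essentially the same route as the paper, which presents Theorem \ref{thm:type-d-structure-consts} as the direct combination of equation (\ref{eqn:type-d-schubert-prod}), Corollary \ref{cor:type-d-richardson}, Theorem \ref{thm:brion}, the proposition that all edges in the weak order graph are single, and the combinatorial description of the $M(W)$-action on type $D$ clans. Your handling of the codimension count and the $n$ odd convention for $w_0'$ matches the paper's intent.
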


\begin{example}\label{ex:example-3}
We first give an example with $n$ even.  Take $n = 4$, and consider the product $S_u \cdot S_v = S_{\overline{4} 1 2 \overline{3}} \cdot S_{1 2 \overline{4} \overline{3}}$.  This corresponds to $u' = 51263784$, $v' = 12563478$.  One computes that $\gamma(u',v')$ is the type $D$ clan $(+,1,-,1,2,+,2,-)$.  As elements of $W$, $l(u) = 3$ and $l(v) = 1$.  There are $23$ elements of $W$ of length $l(u) + l(v) = 4$.  Table 2 of the Appendix shows each of these elements as words in the simple reflections, the clan obtained from computing the action of each on the clan $\gamma(u',v')$, and the corresponding structure constant $c_{u,v}^w$ according to Theorem \ref{thm:type-d-structure-consts}.

The data in Table 2, obtained using Theorem \ref{thm:type-d-structure-consts}, was seen to agree with the output of \cite{Yong-Maple}. 
\end{example}

\begin{example}\label{ex:example-4}
For an example with $n$ odd, take $n=3$ and consider the product $S_u \cdot S_v = S_{132} \cdot S_{\overline{3}1\overline{2}}$.  This corresponds to $u' = 132546$, $v'=415263$.  Here, $\gamma(u',v')$ means the clan $\gamma$ such that $653421 \cdot u' = 635241 = u(\gamma)$ and $v' = v(\gamma)$.  One checks that this clan is $(-,+,-,+,-,+)$.  As elements of $W$, $l(u) = 1$ and $l(v) = 2$.  There are $6$ elements of $W$ of length $l(u) + l(v) = 3$.  Table 3 of the Appendix gives the details of the computation according to Theorem \ref{thm:type-d-structure-consts}.

The data in Table 3 agrees with the output of \cite{Yong-Maple}.
\end{example}

\section{A Final Question}
As we have seen, both here and in \cite{Wyser-11a}, Theorem \ref{thm:brion} applies very generally to the class of any spherical subgroup orbit closure in any flag variety.  In total, we have now seen three examples where closures of orbits of certain symmetric subgroups coincide with Richardson varieties, and have used this to obtain some limited information on Schubert calculus.  The author feels that it is natural to wonder whether there are other examples, and leaves the reader with this question.

\begin{question}
Are there other examples of spherical subgroups of the classical groups, the closures of whose orbits on the flag variety coincide with Richardson varieties?  If so, are combinatorial parametrizations of those orbits understood, and is the $M(W)$-action on the orbits understood on the level of that combinatorial parametrization?
\end{question}

\newpage

\appendix
\section{Tables for Examples}
\begin{table}[h]
	\caption{Example \ref{ex:example-2}:  Computing the $C_4$ Schubert product $S_{\overline{4}123} \cdot S_{1 \overline{4}23}$}
	\scalebox{0.92}{
	\begin{tabular}{|c|c|c|}
		\hline
		Length $7$ Element $w$ & $w \cdot (+,-,1,2,2,1,+,-)$ & $c_{u,v}^w$ \\ \hline
		$[1,2,1,4,3,2,1]$ & $(1,2,3,4,3,4,2,1)$  & $0$ \\ \hline
		$[3,2,1,4,3,2,1]$ & $(1,2,3,4,4,3,2,1)$ & $2$ \\ \hline
		$[1,3,2,4,3,2,1]$ & $(1,2,3,4,4,2,3,1)$ & $0$ \\ \hline
		$[2,3,2,4,3,2,1]$ & $(1,2,3,4,4,3,1,2)$ & $0$ \\ \hline
		$[2,1,3,4,3,2,1]$ & $(1,2,3,4,4,3,2,1)$ & $2$ \\ \hline
		$[1,2,3,4,3,2,1]$ & $(1,2,3,4,4,3,2,1)$ & $2$ \\ \hline
		$[1,3,2,1,4,3,2]$ & $(1,2,3,+,-,3,2,1)$ & $0$ \\ \hline
		$[2,3,2,1,4,3,2]$ & $(1,2,3,+,-,3,2,1)$ & $0$ \\ \hline
		$[4,3,2,1,4,3,2]$ & $(1,2,3,4,4,3,2,1)$ & $1$ \\ \hline
		$[2,1,3,2,4,3,2]$ & $(1,2,+,3,3,-,2,1)$ & $0$ \\ \hline
		$[1,2,3,2,4,3,2]$ & $(1,+,2,3,3,2,-,1)$ & $0$\\ \hline
		$[1,2,1,3,4,3,2]$ & $(1,2,+,3,3,-,2,1)$ & $0$ \\ \hline
		$[2,1,3,2,1,4,3]$ & $(1,2,3,3,4,4,2,1)$ & $0$ \\ \hline
		$[1,2,3,2,1,4,3]$ & $(1,2,3,2,4,3,4,1)$ & $0$ \\ \hline
		$[1,4,3,2,1,4,3]$ & $(1,2,3,4,2,3,4,1)$ & $0$ \\ \hline
		$[2,4,3,2,1,4,3]$ & $(1,2,3,4,1,3,2,4)$ & $0$\\ \hline
		$[3,4,3,2,1,4,3]$ & $(1,2,3,4,4,1,2,3)$ & $0$ \\ \hline
		$[1,2,1,3,2,4,3]$ & $(1,2,+,-,+,-,2,1)$  & $0$ \\ \hline
		$[2,1,4,3,2,4,3]$ & $(1,2,+,3,3,-,2,1)$ & $0$ \\ \hline
		$[1,2,4,3,2,4,3]$ & $(1,+,2,3,3,2,-,1)$ & $0$ \\ \hline
		$[3,2,4,3,2,4,3]$ & $(+,1,2,3,3,2,1,-)$ & $0$ \\ \hline
		$[1,3,4,3,2,4,3]$ & $(1,+,2,3,3,2,-,1)$ & $0$ \\ \hline
		$[2,3,4,3,2,4,3]$ & $(+,1,2,3,3,2,1,-)$ & $0$ \\ \hline
		$[1,2,1,3,2,1,4]$ & $(1,2,3,3,4,4,2,1)$ & $0$ \\ \hline
		$[2,1,4,3,2,1,4]$ & $(1,2,3,4,3,4,2,1)$ & $0$ \\ \hline
		$[1,2,4,3,2,1,4]$ & $(1,2,3,4,2,3,4,1)$ & $0$ \\ \hline
		$[3,2,4,3,2,1,4]$ & $(1,2,3,4,4,1,2,3)$ & $0$ \\ \hline
		$[1,3,4,3,2,1,4]$ & $(1,2,3,4,4,2,3,1)$ & $0$\\ \hline
		$[2,3,4,3,2,1,4]$ & $(1,2,3,4,4,3,1,2)$ & $0$ \\ \hline
		$[1,2,1,4,3,2,4]$ & $(1,2,+,3,3,-,2,1)$ & $0$ \\ \hline
		$[3,2,1,4,3,2,4]$ & $(1,2,3,+,-,3,2,1)$ & $0$ \\ \hline
		$[1,3,2,4,3,2,4]$ & $(1,+,2,3,3,2,-,1)$ & $0$ \\ \hline
		$[2,3,2,4,3,2,4]$ & $(+,1,2,3,3,2,1,-)$ & $0$  \\ \hline
		$[2,1,3,4,3,2,4]$ & $(1,2,+,3,3,-,2,1)$ & $0$ \\ \hline
		$[1,2,3,4,3,2,4]$ & $(1,+,2,3,3,2,-,1)$ & $0$ \\ \hline
		$[1,3,2,1,4,3,4]$ & $(1,2,3,2,4,3,4,1)$ & $0$\\ \hline
		$[2,3,2,1,4,3,4]$ & $(1,2,3,1,4,3,2,4)$ & $0$ \\ \hline
		$[4,3,2,1,4,3,4]$ & $(1,2,3,4,1,3,2,4)$ & $0$ \\ \hline
		$[2,1,3,2,4,3,4]$ & $(1,2,+,-,+,-,2,1)$ & $0$ \\ \hline
		$[1,2,3,2,4,3,4]$ & $(1,+,2,-,+,2,-,1)$ & $0$  \\ \hline
		$[1,4,3,2,4,3,4]$ & $(1,+,2,3,3,2,-,1)$ & $0$ \\ \hline
		$[2,4,3,2,4,3,4]$ & $(+,1,2,3,3,2,1,-)$ & $0$ \\ \hline
		$[3,4,3,2,4,3,4]$ & $(+,1,2,3,3,2,1,-)$ & $0$\\ \hline
		$[1,2,1,3,4,3,4]$ & $(1,2,2,3,3,4,4,1)$ & $0$ \\ \hline
	\end{tabular}
	}
\end{table}

\newpage

\begin{table}[h]
	\caption{Example \ref{ex:example-3}:  Computing the $D_4$ Schubert product $S_{\overline{4} 1 2 \overline{3}} \cdot S_{1 2 \overline{4} \overline{3}}$}
	\begin{tabular}{|c|c|c|}
		\hline
		Length $4$ Element $w$ & $w \cdot (+,1,-,1,2,+,2,-)$ & $c_{u,v}^w$ \\ \hline
		$[1,3,2,1]$ & $(1,2,2,1,3,4,4,3)$  & $0$ \\ \hline
		$[2,3,2,1]$ & $(1,2,2,1,3,4,4,3)$ & $0$ \\ \hline
		$[1,4,2,1]$ & $(1,2,3,4,2,1,4,3)$ & $0$ \\ \hline
		$[2,4,2,1]$ & $(1,2,3,4,3,4,1,2)$ & $1$ \\ \hline
		$[3,4,2,1]$ & $(1,2,3,4,2,1,4,3)$ & $0$ \\ \hline
		$[2,1,3,2]$ & $(1,2,2,1,3,4,4,3)$ & $0$ \\ \hline
		$[1,2,3,2]$ & $(1,+,-,1,2,+,-,2)$ & $0$ \\ \hline
		$[2,1,4,2]$ & $(1,2,+,+,-,-,1,2)$ & $0$ \\ \hline
		$[1,2,4,2]$ & $(1,+,2,+,-,1,-,2)$ & $0$ \\ \hline
		$[3,2,4,2]$ & $(+,1,2,+,-,1,2,-)$ & $0$ \\ \hline
		$[1,3,4,2]$ & $(1,+,2,+,-,1,-,2)$ & $0$\\ \hline
		$[2,3,4,2]$ & $(+,1,2,+,-,1,2,-)$ & $0$ \\ \hline
		$[1,2,1,3]$ & $(1,2,2,1,3,4,4,3)$ & $0$ \\ \hline
		$[4,2,1,3]$ & $(1,2,3,4,2,1,4,3)$ & $0$ \\ \hline
		$[1,4,2,3]$ & $(1,+,2,+,-,1,-,2)$ & $0$ \\ \hline
		$[2,4,2,3]$ & $(+,1,2,+,-,1,2,-)$ & $0$\\ \hline
		$[3,4,2,3]$ & $(+,1,2,+,-,1,2,-)$ & $0$ \\ \hline
		$[1,2,1,4]$ & $(1,2,+,+,-,-,1,2)$  & $0$ \\ \hline
		$[3,2,1,4]$ & $(1,2,+,+,-,-,1,2)$ & $0$ \\ \hline
		$[1,3,2,4]$ & $(1,+,2,+,-,1,-,2)$ & $0$ \\ \hline
		$[2,3,2,4]$ & $(+,1,2,+,-,1,2,-)$ & $0$ \\ \hline
		$[2,1,3,4]$ & $(1,2,+,+,-,-,1,2)$ & $0$ \\ \hline
		$[1,2,3,4]$ & $(1,+,2,+,-,1,-,2)$ & $0$ \\ \hline
	\end{tabular}
\end{table}

\begin{table}[h]
	\caption{Example \ref{ex:example-4}:  Computing the $D_3$ Schubert product $S_{132} \cdot S_{\overline{3}1\overline{2}}$}
	\begin{tabular}{|c|c|c|}
		\hline
		Length $3$ Element $w$ & $w \cdot (-,+,-,+,-,+)$ & $c_{u,v}^w$ \\ \hline
		$[1,2,1]$ & $(1,-,1,2,+,2)$  & $0$ \\ \hline
		$[1,3,1]$ & $(1,+,2,1,-,2)$ & $0$ \\ \hline
		$[2,3,1]$ & $(1,2,+,-,1,2)$ & $1$ \\ \hline
		$[3,1,2]$ & $(1,2,+,-,1,2)$ & $1$ \\ \hline
		$[2,1,3]$ & $(1,-,1,2,+,2)$ & $0$ \\ \hline
		$[1,2,3]$ & $(1,-,1,2,+,2)$ & $0$ \\ \hline
	\end{tabular}
\end{table}

\bibliographystyle{alpha}
\bibliography{sourceDatabase}

\end{document}